\documentclass[11pt,a4paper]{article}
\usepackage[utf8]{inputenc}
\usepackage{amsmath,amsthm,amssymb,amsfonts}
\usepackage{geometry}
\usepackage{hyperref}
\usepackage[numbers]{natbib}

\theoremstyle{plain}
\newtheorem{thm}{Theorem}[section]
\newtheorem{lem}[thm]{Lemma}
\newtheorem{prop}[thm]{Proposition}
\newtheorem{cor}[thm]{Corollary}
\newtheorem{conj}[thm]{Conjecture}

\theoremstyle{definition}
\newtheorem{defn}[thm]{Definition}
\newtheorem{rmk}[thm]{Remark}

\newtheorem{ques}[thm]{Question}

\newcommand{\Fq}{\mathbb F_q}
\newcommand{\qbinom}[2]{\genfrac{[}{]}{0pt}{}{#1}{#2}_q}
\newcommand{\points}[1]{\qbinom{#1}{1}}
\newcommand{\Gr}[2]{\mathrm{Gr}_{#2}(#1)}
\newcommand{\cA}{\mathcal A}
\newcommand{\cB}{\mathcal B}
\newcommand{\cC}{\mathcal C}
\newcommand{\cF}{\mathcal F}
\newcommand{\cG}{\mathcal G}

\newcommand{\cM}{\mathcal M}
\newcommand{\cR}{\mathcal R}

\title{Large point-degrees in intersecting families of finite vector spaces}
\author{
  Jicheng Ma\thanks{School of Mathematics, Renmin University of China, Beijing, 100872, China. Email: mjc191812@ruc.edu.cn}
\and
  Hao Li\thanks{School of Mathematics, Renmin University of China, Beijing, 100872, China. Email: hlimath@ruc.edu.cn}
}
\date{}

\begin{document}
\maketitle

\begin{abstract}
Let \(V\) be an \(n\)-dimensional vector space over the finite field \(\Fq\), and let \(\Gr{V}{k}\) denote the family of all \(k\)-dimensional subspaces of \(V\).
A family \(\cF\subseteq\Gr{V}{k}\) is called intersecting if \(\dim(F\cap F')\ge1\) for all \(F,F'\in\cF\).
For a point \(P\le V\), let \(d_P(\cF)\) denote the number of members of \(\cF\) that contain \(P\), and order the point-degrees as \(d_1(\cF)\ge d_2(\cF)\ge\cdots\ge d_{\points{n}}(\cF)\), where \(\points{m}=(q^m-1)/(q-1)\) is the number of points in an \(m\)-dimensional subspace.
Recent work of Frankl and Wang~\cite{FW2025} and of Huang and Rao~\cite{HR2026} established that for \(k\)-uniform intersecting families \(\cF\subseteq\binom{[n]}{k}\) with \(n\ge2k+1\), the bound \(\binom{n-2}{k-2}\) governs the order statistic \(d_{2k+1}(\cF)\).
We prove that every intersecting family \(\cF\subseteq\Gr{V}{k}\) with \(n\ge2k+1\) satisfies \(d_{\points{k}^{2}}(\cF)\le\qbinom{n-2}{k-2}\).
The naive \(q\)-analog of the Huang--Rao \((k+2)\)-th degree theorem fails, as a vector-space Hilton--Milner construction has \(\points{k+1}\) points of degree larger than \(\qbinom{n-2}{k-2}\); we prove the corrected bound \(d_{1+\points{k+1}}(\cF)\le\qbinom{n-2}{k-2}\) for fixed \(q\), sufficiently large \(k\), and \(n>3k\), using a structural theorem of Ihringer and Kupavskii~\cite{IK2026}.
For larger degree indices \(i\), a saturated Frankl--Hilton--Milner family of Ihringer and Kupavskii identifies the conjectural sharp bound on \(d_{\points{i+1}}(\cF)\), and we prove two necessary conditions that any strict counterexample to this conjecture must satisfy.
\end{abstract}




\section{Introduction}
\label{sec:introduction}

We denote by \([n]\) the standard \(n\)-element set \(\{1,2,\dots,n\}\), by \(\binom{[n]}{k}\) the family of all \(k\)-element subsets of \([n]\), and by \(2^{[n]}\) the power set of \([n]\).
For a family \(\mathcal F\subseteq 2^{[n]}\), we say that \(\mathcal F\) is \(t\)-intersecting if \(|A\cap B|\ge t\) for all \(A,B\in\mathcal F\).
If \(\mathcal F\) is \(1\)-intersecting, we simply call it intersecting.
If a \(t\)-intersecting family \(\mathcal F\) is contained in \(\binom{[n]}{k}\), we call it a \(k\)-uniform \(t\)-intersecting family.

The Erd\H{o}s--Ko--Rado theorem~\cite{EKR1961} states that if \(\mathcal F\) is a \(k\)-uniform intersecting family and \(n\ge2k\), then \(|\mathcal F|\le\binom{n-1}{k-1}\); see~\cite{FT2018} for a textbook exposition.
Moreover, equality holds when \(\mathcal F\) consists of all \(k\)-element sets containing a fixed element \(i\), that is, when \(\mathcal F\) is isomorphic to a \(1\)-star.
For this family, the element \(i\) appears in \(\binom{n-1}{k-1}\) members of \(\mathcal F\), while every other element appears in \(\binom{n-2}{k-2}\) members.
For a family \(\mathcal F\subseteq\binom{[n]}{k}\) and \(i\in[n]\), define the degree of \(i\) by
\[
  d_i(\mathcal F):=|\{A\in\mathcal F:i\in A\}|.
\]
When the family \(\mathcal F\) is clear from context, we simply write \(d_i\).
Without loss of generality, we order the degrees as \(d_1\ge d_2\ge d_3\ge\cdots\ge d_n\).

The study of degree conditions in intersecting families goes back at least to Frankl~\cite{Frankl1987maxdeg}, who proved sharp upper bounds on the maximum degree under additional restrictions; the structure of extremal \(t\)-intersecting families was subsequently settled by the complete intersection theorem of Ahlswede and Khachatrian~\cite{AK1997}.
In 2017, Huang and Zhao~\cite{HZ2017} used spectral graph theory to prove that in a \(k\)-uniform intersecting set family \(\mathcal F\), the minimum degree \(d_n(\mathcal F)\) is at most \(\binom{n-2}{k-2}\).
Frankl and Wang~\cite{FW2025} extended this by showing that the upper bound \(\binom{n-2}{k-2}\) governs not only the minimum degree but most of the degrees.
They proved that for \(n>6k-9\), the inequality \(d_{2k+1}(\mathcal F)\le\binom{n-2}{k-2}\) holds, and conjectured that the same bound should remain valid for all \(n>2k\).
Huang and Rao~\cite{HR2026} verified this conjecture and proved further large-degree results.

\begin{thm}[Huang--Rao, Theorem~1.2 of~\cite{HR2026}]\label{thm:2kplus1-degree}
Let \(n\ge2k+1\), and let \(\mathcal F\subseteq\binom{[n]}{k}\) be an intersecting family.
Then
\[
  d_{2k+1}(\mathcal F)\le \binom{n-2}{k-2}.
\]
\end{thm}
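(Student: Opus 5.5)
One natural plan is to argue by contradiction. Suppose \(T\subseteq[n]\) is a set of \(2k+1\) elements with \(d_x(\mathcal F)>\binom{n-2}{k-2}\) for every \(x\in T\). We may assume \(\mathcal F\) is maximal intersecting, since adding sets only raises degrees.

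\textbf{Step 1: rule out the trivial case.} If \(\mathcal F\) is contained in a star centred at \(c\), then every \(x\neq c\) satisfies \(d_x\le|\{F: c,x\in F\}|=\binom{n-2}{k-2}\). So at most one element exceeds the bound, a contradiction. Hence \(\mathcal F\) is non-trivial, \(\bigcap\mathcal F=\emptyset\), and its covering number \(\tau(\mathcal F)\) is at least \(2\).

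\textbf{Step 2: pair off elements of \(T\) via cross-intersecting links.} Fix \(x,y\in T\) and split
\[
  d_x=|\mathcal F(x,y)|+|\mathcal F(x,\bar y)|, \qquad |\mathcal F(x,y)|\le\binom{n-2}{k-2}.
\]
Thus \(d_x>\binom{n-2}{k-2}\) forces \(\mathcal F(x,\bar y)\neq\emptyset\) for every \(y\in T\setminus\{x\}\). The families \(\mathcal F(x,\bar y)\) and \(\mathcal F(y,\bar x)\), viewed as \((k-1)\)-uniform families on \([n]\setminus\{x,y\}\), are cross-intersecting. The plan is to exploit these links simultaneously for all pairs in \(T\).

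\textbf{Step 3: choose an anchor set and count.} Pick \(A\in\mathcal F\) minimising \(|A\cap T|\); then \(T':=T\setminus A\) has at least \(k+1\) elements. Every member through \(x\in T'\) meets \(A\), so
\[
  d_x\le\sum_{a\in A}|\mathcal F(x,a)|.
\]
I would refine this using the excess \(d_x-\binom{n-2}{k-2}\) and the sets \(B_x\in\mathcal F(x,\bar y)\) from Step 2. The aim is to show that \(k+1\) elements of \(T'\), all of large degree, force two members of \(\mathcal F\) to be disjoint.

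Concretely, I expect to find \(x,y\in T'\) for which \(|\mathcal F(x,\bar y)|\) and \(|\mathcal F(y,\bar x)|\) are both large. I would first try an averaging argument over pairs in \(T'\) for this. The cross-intersecting bound (Hilton's inequality, or a Kruskal--Katona/shifting argument on \([n]\setminus\{x,y\}\) with \(n-2\ge 2(k-1)+1\)) then caps the sum \(|\mathcal F(x,\bar y)|+|\mathcal F(y,\bar x)|\). Combined with \(|\mathcal F(x,y)|\le\binom{n-2}{k-2}\), this should give \(d_x+d_y\le 2\binom{n-2}{k-2}\), which is the contradiction sought.

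\textbf{Main obstacle.} The hardest part is Step 3 in the range \(n\) close to \(2k+1\). There the cross-intersecting bound is not strong enough pair by pair, because one of the two link families may be tiny while the other is huge.

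To handle this, I would use the full set \(T'\) of size \(\ge k+1\) via a pigeonhole argument. A set \(B\in\mathcal F\) of size \(k\) misses at least one element of \(T'\) and must still meet every set through that element, and one iterates this. If that fails, I would switch to a shifting argument: apply \(S_{ij}\) only to pairs with \(i,j\in T\) or \(i,j\notin T\). Such shifts preserve intersection and keep the multiset of degrees on \(T\) controlled, so we can reduce to a left-compressed family on the coordinates of \(T\). For such a family, the intersecting property should pin down the degrees explicitly and give the bound.
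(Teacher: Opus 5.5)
There is no in-paper proof to compare against. The paper quotes this statement from Huang--Rao. Its own method (Hilton--Milner plus incidence counting, as in the proof of Theorem~\ref{thm:q-fw}) gives for sets only $d_{k^2}(\mathcal F)\le\binom{n-2}{k-2}$, which is weaker for $k\ge3$. So your proposal has to stand on its own, and it has a genuine gap.

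\textbf{Step 3 does not work.} Steps~1 and~2 are correct but routine; all the content is in Step~3. You have $d_x+d_y=2|\mathcal F(x,y)|+|\mathcal F(x,\bar y)|+|\mathcal F(y,\bar x)|$, and Step~2 makes both cross terms nonempty. So $|\mathcal F(x,y)|\le\binom{n-2}{k-2}$ together with \emph{any} cap $M$ on the cross sum yields only $d_x+d_y\le2\binom{n-2}{k-2}+M$ with $M\ge2$. What you actually need is the trade-off
\[
|\mathcal F(x,\bar y)|+|\mathcal F(y,\bar x)|\le2\Bigl(\binom{n-2}{k-2}-|\mathcal F(x,y)|\Bigr).
\]
The deficit of $\mathcal F(x,y)$ comes only from members avoiding both $x$ and $y$. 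No cross-intersecting inequality between the two links can see these members. A single anchor $A$ forces a deficit of only $\binom{n-k-2}{k-2}$, which is $k-1$ when $n=2k+1$.

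\textbf{No pairwise argument can succeed.} Take $\mathcal G=\{F:c\in F,\ F\cap Y\neq\emptyset\}\cup\{F:c\notin F,\ Y\subseteq F\}$ with $3\le|Y|\le k$, and two points $y,y'\in Y$.
\begin{itemize}
\item Both are missed by a member through $c$ and a third point of $Y$.
\item The links $\mathcal G(y,\bar{y}')$ and $\mathcal G(y',\bar y)$ are cross-intersecting and both have the large size $\binom{n-3}{k-2}$.
\item Yet $d_y=d_{y'}=\binom{n-2}{k-2}+\binom{n-|Y|-1}{k-|Y|}$, so $d_y+d_{y'}>2\binom{n-2}{k-2}$.
\end{itemize}
So everything you use locally holds for this pair, and the desired inequality still fails. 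Any contradiction must come from using all $2k+1$ high elements together. The ``averaging'' and ``pigeonhole, then iterate'' steps stand in for exactly this missing global idea.

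\textbf{The shifting fallback fails as stated.}
\begin{itemize}
\item Shifts between $i,j\notin T$ do preserve every degree on $T$.
\item A shift between $i,j\in T$ preserves only $d_i+d_j$ and moves degree from $j$ to $i$. So the hypothesis that all $2k+1$ elements of $T$ exceed $\binom{n-2}{k-2}$ is not shift-invariant.
\end{itemize}
Concretely, take all $4$-subsets of $[9]$ that contain a line of a Fano plane on $[7]$ (so $k=4$, $n=2k+1$). Each of the seven Fano points has degree $22>\binom{7}{2}$. A single shift replacing $j$ by $i$, for two Fano points $i,j$, lowers $d_j$ to $16$.

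You cannot repair this by passing to a shift-invariant quantity such as $\sum_{x\in T}d_x$. The bound $(2k+1)\binom{n-2}{k-2}$ on that sum already fails for a star when $T$ contains its centre. Hence the reduction to a family compressed on $T$ is unavailable, and the proposal does not yet contain a proof.
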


\begin{thm}[Huang--Rao, Theorem~1.4 of~\cite{HR2026}]\label{thm:kplus2-degree}
For all sufficiently large \(k\), if \(n>12k\) and \(\mathcal F\subseteq\binom{[n]}{k}\) is intersecting, then
\[
  d_{k+2}(\mathcal F)\le \binom{n-2}{k-2}.
\]
\end{thm}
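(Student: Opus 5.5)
The plan is to argue by contradiction. Suppose that \(\mathcal F\subseteq\binom{[n]}{k}\) is intersecting, \(n>12k\), and \(d_{k+2}(\mathcal F)>\binom{n-2}{k-2}\). Let \(T\) be a set of \(k+2\) elements each of degree exceeding \(\binom{n-2}{k-2}\). The benchmark is the Hilton--Milner family, which has exactly \(k+1\) such elements: the centre \(x\) and the \(k\) elements of the exceptional set \(B\). So the argument must show that "large degree" forces the structure to look like a star or a Hilton--Milner family, and that these structures cannot support \(k+2\) heavy elements.

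\textbf{Step 1 (easy cases by covering number).} If \(\tau(\mathcal F)=1\), then \(\mathcal F\) is contained in the star of some \(x\). Every \(y\ne x\) then has \(d_y\le\binom{n-2}{k-2}\), so only one element is heavy, a contradiction.

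If \(\tau(\mathcal F)\ge 3\), the branching argument applies. Every member through \(y\) meets a fixed member \(F_1\not\ni y\), in some \(z_1\). Some further member avoids both \(y\) and \(z_1\) and must also be met, in some \(z_2\). Hence \(d_y\le k^2\binom{n-3}{k-3}\). This alone beats \(\binom{n-2}{k-2}\) only for \(n\gtrsim k^3\), so it is insufficient when \(n>12k\). For this reason I would not handle the large-\(\tau\) case directly. Instead I would run everything through a spread approximation, which is where "sufficiently large \(k\)" enters.

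\textbf{Step 2 (spread approximation).} I will use the Kupavskii--Zakharov spread approximation for intersecting families with \(n>Ck\). It gives an intersecting family \(\mathcal S\) of sets of size at most some \(r\ll k\), together with a remainder \(\mathcal F\setminus\mathcal F[\mathcal S]\) of size at most roughly \((Ck/n)^{r+1}\binom{n}{k}\cdot\varepsilon\). For \(k\) large and \(n>12k\) this remainder is \(o\bigl(\binom{n-3}{k-3}\bigr)\) relative to the gaps we need.

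For each \(z\), the degree is then
\[
d_z\le\sum_{S\in\mathcal S,\ z\in S}\binom{n-|S|}{k-|S|}+\sum_{S\in\mathcal S,\ z\notin S}\binom{n-|S|-1}{k-|S|-1}+o\Bigl(\binom{n-3}{k-3}\Bigr).
\]
The terms with \(z\notin S\) are negligible unless \(|S|\le 2\).

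\textbf{Step 3 (analysing the heavy elements of \(\mathcal S\)).} This is where I expect the main difficulty. There are three regimes.

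If \(\mathcal S\) contains a singleton \(\{x\}\), then by intersection every other member of \(\mathcal S\) contains \(x\). This is essentially the star case from Step 1, and only \(x\) is heavy.

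If \(\mathcal S\) contains no singleton, a heavy \(z\) must have substantial weight from members of \(\mathcal S\) through it. It needs either at least two pairs of \(\mathcal S\) through \(z\), or at least roughly \((n-2)/(k-2)>12\) triples, or more members of larger size. Since each pair through \(z\) contributes only \(\binom{n-2}{k-2}\), a single pair is not enough once the intersecting constraint is taken into account.

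An intersecting family of pairs is a star or a triangle, giving at most \(3\) heavy elements from pairs. For the contributions of triples and larger sets, I will use a weighted counting argument. The sets of \(\mathcal S\) pairwise intersect, so the total weight is at most about \(\binom{n-1}{k-1}\) by Erdős--Ko--Rado. Each heavy element needs weight exceeding \(\binom{n-2}{k-2}\approx\frac{k}{n}\binom{n-1}{k-1}\). Summing the weight over heavy elements with multiplicity \(|S|\le r\) bounds the number of heavy elements by roughly \(r\cdot\frac{n}{k}\cdot\frac{k}{n}\)-type quantities, and I would try to show this is at most \(k+1\).

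\textbf{Step 4 (closing the near-Hilton--Milner case).} The delicate case is when \(\mathcal S\) is close to Hilton--Milner: the star of \(x\) plus one \(k\)-set \(B\), with \(k+1\) heavy elements. Here I would argue directly. Any further heavy element \(w\notin B\cup\{x\}\) forces many members avoiding \(x\), and each of these must meet every member of \(\mathcal F(x)\). This shrinks \(\mathcal F(x)\), and I would show it pushes some element of \(B\) below \(\binom{n-2}{k-2}\). The condition \(n>12k\) is what makes these losses dominate the \(o(\cdot)\) error terms from Step 2.
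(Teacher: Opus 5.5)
\textbf{There is a genuine gap.} Note first that the paper gives no proof of this statement: it is quoted from Huang--Rao as background. The closest argument in the paper is the proof of its $q$-analogue, Theorem~\ref{thm:q-kplus2-cq3}.

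Your outline has the same architecture as that proof. You approximate $\mathcal F$ by a core of small sets, then force every non-central heavy element into every member of the remainder. The mechanism that makes this work over $\mathbb F_q$ fails for sets. Over $\mathbb F_q$, one remainder member $B$ with $(E+P)\cap B=0$ kills $D\ge q^{(k-2)(n-k)}$ of the $T$ subspaces through $E+P$, which is a constant fraction of $T$. Meanwhile the Ihringer--Kupavskii remainder is below $T$ by a factor $q^{-(n-k)+O(1)}$. For sets, a member $B$ avoiding $x$ and $y$ kills only $\binom{n-k-2}{k-2}$ of the sets through $\{x,y\}$, and
\[
  \frac{\binom{n-k-2}{k-2}}{\binom{n-2}{k-2}}
  =\prod_{j=0}^{k-3}\Bigl(1-\frac{k}{n-2-j}\Bigr)
  \le e^{-k(k-2)/(n-2)}.
\]
This is $e^{-\Theta(k)}$ when $n=\Theta(k)$. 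With your own remainder bound of order $(Ck/n)^{r+1}\binom{n}{k}$, beating this loss would require $r=\Omega(k)$. That is incompatible with $r\ll k$ and with your Step~3. So the closing claim of Step~4, that $n>12k$ makes the losses dominate the error terms, is false throughout the range $n=\Theta(k)$, which is the heart of the theorem. In addition, the Hilton--Milner set $B$ has size $k$, so it cannot belong to a core of sets of size at most $r$. It lies in the remainder, exactly where your argument has no control.

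Step~3 does not fill the gap. The EKR weight count gives only $|H|<k\binom{n-1}{k-1}/\binom{n-2}{k-2}=k(n-1)/(k-1)$, roughly $n$ heavy elements, not $k+1$. Two further assertions are unjustified. First, you claim a single pair of $\mathcal S$ through $z$ cannot make $z$ heavy; but $z$ can lie in one pair $\{z,w\}$ and also in many triples $\{z,a,b\}$. Second, you claim members $S\not\ni z$ with $|S|\ge3$ are negligible; but $\mathcal S$ can contain many sets avoiding $z$.

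What does go through, with no approximation at all, is the set version of the single-member-loss step in the paper's proof of Theorem~\ref{thm:q-kplus2-cq3}. Write $\mathcal F[x]=\{F\in\mathcal F:x\in F\}$. If $y\notin B\in\mathcal F\setminus\mathcal F[x]$, then
\[
d_y\le\binom{n-2}{k-2}-\binom{n-k-2}{k-2}+|\mathcal F\setminus\mathcal F[x]|-1.
\]
Hence, whenever $|\mathcal F\setminus\mathcal F[x]|\le\binom{n-k-2}{k-2}$, every heavy $y\ne x$ lies in every member of $\mathcal F\setminus\mathcal F[x]$, and there are at most $k+1$ heavy elements. The whole difficulty is therefore the complementary regime $|\mathcal F\setminus\mathcal F[x]|>\binom{n-k-2}{k-2}$. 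There the non-star part can be invisible to any small-set core yet still large enough to pay for single-member losses. You need a genuinely new ingredient for this regime. One example would be a cross-intersecting (Kruskal--Katona type) estimate showing that, over $k+1$ non-central candidates $y$, the total loss $\sum_y\bigl(\binom{n-2}{k-2}-d_y(\mathcal F[x])\bigr)$ exceeds the total gain $\sum_y d_y(\mathcal F\setminus\mathcal F[x])$. Your proposal does not supply any such ingredient.
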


They also proved a large-\(\ell\) degree theorem~\cite[Theorem 1.6]{HR2026}.

The problems in extremal set theory have natural extensions to families of subspaces over a finite field; we refer to Frankl and Wilson~\cite{FW1986} for the foundational vector-space \(t\)-intersection theory. Let \(q\) be a prime power, and let \(V\) be an \(n\)-dimensional vector space over \(\Fq\).
We write \(\Gr{V}{k}\) for the family of all \(k\)-dimensional subspaces of \(V\), and
\[
  \qbinom{n}{k}
  =
  \prod_{j=0}^{k-1}\frac{q^{n-j}-1}{q^{k-j}-1}
\]
for the Gaussian binomial coefficient.
We use the convention that \(\qbinom{a}{b}=0\) when \(b<0\) or \(b>a\).
For brevity, write
\[
  \points{m}:=\qbinom{m}{1}=\frac{q^m-1}{q-1}
\]
for the number of points in an \(m\)-dimensional vector space over \(\Fq\).
Throughout the paper, a line means a \(2\)-dimensional subspace.
A family \(\cF\subseteq\Gr{V}{k}\) is called intersecting if
\[
  \dim(F\cap F')\ge1
  \qquad\text{for all }F,F'\in\cF.
\]
For a point, that is, a \(1\)-dimensional subspace \(P\le V\), define its point-degree in \(\cF\) by
\[
  d_P(\cF)=|\{F\in\cF:P\le F\}|.
\]
Order all point-degrees as
\[
  d_1(\cF)\ge d_2(\cF)\ge\cdots\ge d_{\points n}(\cF).
\]

Several finite-vector-space degree and stability results are closely related to this setting.
The vector-space Hilton--Milner theorem of Blokhuis, Brouwer, Chowdhury, Frankl, Mussche, Patk\'os and Sz\H{o}nyi~\cite{Blokhuis2010}, together with the binary boundary case supplied by Wang, Xu and Zhang~\cite{WXZ2023}, is the main input in our first theorem.
Shan and Zhou~\cite{SZ2024} proved a \(d\)-degree Erd\H{o}s--Ko--Rado theorem for finite vector spaces, which concerns minimum degrees over higher-dimensional subspaces.
Ihringer and Kupavskii~\cite{IK2026} developed a structural theory for large \(t\)-intersecting families of subspaces and proved a vector-space Frankl degree-type theorem; their structure theorem is the main input for our corrected \(k+2\) result, while their Frankl degree-type examples motivate the large-\(i\) conjecture below.
Their own Frankl degree-type theorem bounds the size of a non-star portion of an intersecting family; it does not directly bound the order statistic \(d_{1+\points{k+1}}(\cF)\) considered in our Theorem~\ref{thm:q-kplus2-cq3}, which is the natural \(q\)-analog of the Huang--Rao \((k+2)\)-th degree theorem.

The natural \(q\)-analog of the index \(2k+1\) is \(1+\points{2k}\), which specializes to \(2k+1\) in the formal limit \(q\to1\).
We prove a stronger statement at the smaller index \(\points{k}^{2}\).

\begin{thm}[A stronger \(q\)-Frankl--Wang theorem]\label{thm:q-fw}
Let \(k\ge2\), let \(V\) be \(n\)-dimensional over \(\Fq\), let \(n\ge2k+1\), and let \(\cF\subseteq\Gr{V}{k}\) be intersecting.
Then
\[
  d_{\points{k}^{2}}(\cF)
  \le
  \qbinom{n-2}{k-2}.
\]
Consequently,
\[
  d_{1+\points{2k}}(\cF)
  \le
  \qbinom{n-2}{k-2}.
\]
\end{thm}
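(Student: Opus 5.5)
The plan is a double-counting argument that splits on whether \(\cF\) is a star. Call a point \(P\) \emph{heavy} if \(d_P(\cF)>\qbinom{n-2}{k-2}\), and let \(H\) be the set of heavy points. The first claim of Theorem~\ref{thm:q-fw} is equivalent to \(|H|<\points{k}^{2}\). If \(\cF=\emptyset\) there is nothing to prove, so assume \(\cF\) is nonempty.

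\emph{Star case.} Suppose all members of \(\cF\) contain a common point \(P_0\). Any member containing another point \(Q\neq P_0\) then contains the line \(P_0+Q\). There are exactly \(\qbinom{n-2}{k-2}\) \(k\)-spaces containing a fixed line, so \(d_Q(\cF)\le\qbinom{n-2}{k-2}\) for every \(Q\ne P_0\). Hence \(|H|\le1<\points{k}^2\).

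\emph{Non-trivial case.} Suppose \(\bigcap_{F\in\cF}F=0\), i.e.\ \(\cF\) is not contained in a point-star. Count incidences between points and members:
\[
  \sum_{P}d_P(\cF)=\points{k}\,|\cF|.
\]
If \(|H|\ge\points{k}^{2}\), the heavy points alone give \(\points{k}|\cF|>\points{k}^{2}\qbinom{n-2}{k-2}\), that is,
\[
  |\cF|>\points{k}\qbinom{n-2}{k-2}.
\]
The vector-space Hilton--Milner theorem of Blokhuis et al.~\cite{Blokhuis2010}, with the boundary case \(q=2\), \(n=2k+1\) taken from Wang, Xu and Zhang~\cite{WXZ2023}, states that a non-trivial intersecting family satisfies
\[
  |\cF|\le \qbinom{n-1}{k-1}-q^{k(k-1)}\qbinom{n-k-1}{k-1}+q^{k}
\]
for \(n\ge 2k+1\). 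It therefore suffices to prove the numerical inequality
\[
  \qbinom{n-1}{k-1}-q^{k(k-1)}\qbinom{n-k-1}{k-1}+q^{k}\le\points{k}\qbinom{n-2}{k-2}.
\]

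\emph{The numerical inequality.} I would prove this combinatorially rather than by brute algebra. Fix a point \(P\) and a \(k\)-space \(U\) with \(P\not\le U\). The first two terms of the Hilton--Milner bound count the \(k\)-spaces through \(P\) that meet \(U\) nontrivially. Each such space contains a line \(P+R\) for some point \(R\le U\). By a union bound over the \(\points{k}\) choices of \(R\), there are at most \(\points{k}\qbinom{n-2}{k-2}\) such spaces.

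A space \(F\) meeting \(U\) in a subspace of dimension \(j\ge2\) is counted \(\points{j}\ge q+1\) times. So the union bound overcounts by at least \(q\) times the number of \(k\)-spaces through \(P\) meeting \(U\) in at least a line. That number is at least the number of \(k\)-spaces containing \(P\) and a fixed line of \(U\), which is \(\qbinom{n-3}{k-3}\). The inequality then reduces to \(q^{k}\le q\qbinom{n-3}{k-3}\). This holds comfortably for \(k\ge3\), since \(\qbinom{n-3}{k-3}\ge \points{n-3}\ge q^{n-4}\) and \(n-3\ge k\).

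For \(k=2\) the argument must be handled directly. An intersecting family of lines is either a star or contained in a plane. In the plane case every point of that plane has degree at most \(q+1\), points outside the plane have degree \(0\), and \(\qbinom{n-2}{0}=1\). So \(|H|\le\points{3}=q^2+q+1<(q+1)^2=\points{2}^2\).

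I expect this Hilton--Milner comparison, especially making the overcount estimate rigorous in the boundary regime \(n=2k+1\), \(q=2\), to be the main obstacle. If the overcount estimate is too weak there, I would fall back on evaluating the Gaussian binomials explicitly.

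\emph{Consequence.} The second claim follows because the degrees are sorted and
\[
  1+\points{2k}=1+\points{k}(1+q^{k})\ge\points{k}^{2},
\]
using \(\points{k}\le q^{k}\). Hence \(d_{1+\points{2k}}(\cF)\le d_{\points{k}^2}(\cF)\le\qbinom{n-2}{k-2}\).
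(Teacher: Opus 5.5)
Your overall route is the paper's. You dispose of stars (your single star case cleanly absorbs the paper's separate \(\dim C=1\) and \(\dim C\ge2\) subcases) and treat \(k=2\) by the line classification. In the non-trivial case you combine the Hilton--Milner bound with the high-point incidence count, reducing to \(f(n,k,q)\le\points{k}\qbinom{n-2}{k-2}\) via a union-bound overcount.

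\textbf{The gap is in that last estimate, at \(k=3\).} You lower-bound the number \(N\) of \(k\)-spaces through \(P\) meeting \(U\) in dimension at least \(2\) by the number of \(k\)-spaces containing \(P\) and one fixed line of \(U\), namely \(\qbinom{n-3}{k-3}\). You then need \(qN\ge q^k\). For \(k=3\) this count is \(\qbinom{n-3}{0}=1\), since the only such space is \(P+\ell\). Your reduced inequality then reads \(q^3\le q\), which is false. The claimed bound \(\qbinom{n-3}{k-3}\ge\points{n-3}\) holds only for \(k\ge4\). Since \(k=3\) lies in the range where you invoke Hilton--Milner, the argument as written does not cover it.

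\textbf{The fix is not to fix a line.} Let \(S\) run over the \(\points{k}\) hyperplanes of \(U\). The subspaces \(P+S\) are \(k\)-spaces through \(P\), and they are pairwise distinct because \((P+S)\cap U=S\) when \(P\nleq U\). Each meets \(U\) in dimension \(k-1\ge2\). Hence \(N\ge\points{k}\ge q^{k-1}\), and the overcount is at least \(q\points{k}\ge q^k\) for every \(k\ge3\). This is exactly the paper's Lemma~\ref{lem:hm-convenient}, which even records the sharper overcount \(\points{k}(\points{k-1}-1)\).

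The boundary regime \(q=2\), \(n=2k+1\) that you flagged is not where the difficulty lies. The overcount does not depend on \(n\), and that boundary case enters only through the applicability of Hilton--Milner, which you already cover by citing Wang--Xu--Zhang. With this repair, the rest of your proof (star case, \(k=2\), double count, and the consequence via \(\points{k}^2\le1+\points{2k}\)) is correct.
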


\begin{rmk}\label{rmk:q-scale-strengthening}
For every \(q\ge2\), one has \(\points{k}^{2}<\points{2k}\), so Theorem~\ref{thm:q-fw} is strictly stronger than the statement at the index \(1+\points{2k}\).
In the formal limit \(q\to1\), the comparison reverses to \(k^{2}>2k\) for \(k\ge3\); the consequence at the index \(1+\points{2k}\) is the direct \(q\to1\) analog of the Huang--Rao theorem.
\end{rmk}

Already the vector-space Hilton--Milner family~\cite{HiltonMilner1967,Blokhuis2010}
\[
  \{F\in\Gr{V}{k}:E\le F,\ \dim(F\cap L)\ge1\}
  \cup
  \{L\},
\]
where \(L\) is a \(k\)-space and \(E\nleq L\) is a point, has \(1+\points{k}\) points of degree larger than \(\qbinom{n-2}{k-2}\).
This shows that the literal \(q\)-analogue of the index \(2k+1\) fails for fixed \(q>1\) and large \(k\).

A similar phenomenon affects the set-theoretic \(k+2\) theorem of Huang and Rao.
The vector-space Hilton--Milner family
\[
  \cM(E,L)
  =
  \{F\in\Gr{V}{k}:E\le F,\ \dim(F\cap L)\ge1\}
  \cup
  \{G\in\Gr{E+L}{k}:E\nleq G\},
\]
where \(E\) is a point and \(L\) is a \(k\)-space with \(E\nleq L\), has \(\points{k+1}\) points of degree greater than \(\qbinom{n-2}{k-2}\) (Proposition~\ref{prop:q-kplus2-false}).
The index therefore cannot be improved below \(1+\points{k+1}\) in any range where the upper bound holds.
We formulate this as follows.

\begin{conj}[A corrected \(q\)-analog of the \(k+2\) theorem]\label{conj:q-kplus2-corrected}
There is a constant \(C_q\) such that, for all sufficiently large \(k\) and all \(n>C_q\,k\), every intersecting family \(\cF\subseteq\Gr{V}{k}\) satisfies
\[
  d_{1+\points{k+1}}(\cF)
  \le
  \qbinom{n-2}{k-2}.
\]
\end{conj}

We prove this corrected conjecture in a linear range with an explicit constant.

\begin{thm}[The corrected \(q\)-analog for \(n>3k\)]\label{thm:q-kplus2-cq3}
Fix \(q\).
There exists \(k_0=k_0(q)\) such that, whenever \(k\ge k_0\), \(n>3k\), and \(\cF\subseteq\Gr{V}{k}\) is intersecting, one has
\[
  d_{1+\points{k+1}}(\cF)
  \le
  \qbinom{n-2}{k-2}.
\]
Thus the multiplicative constant \(3\) is admissible in Conjecture~\ref{conj:q-kplus2-corrected} for each fixed \(q\), with the threshold \(k_0\) depending on \(q\).
\end{thm}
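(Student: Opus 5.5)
We argue by contradiction. Let \(\cF\) be intersecting with \(k\ge k_0\), \(n>3k\), and suppose at least \(1+\points{k+1}\) points have degree exceeding \(\qbinom{n-2}{k-2}\). We may assume \(\cF\) is maximal intersecting, since enlarging \(\cF\) does not decrease any point-degree. Let \(\tau(\cF)\) be the covering number, i.e.\ the least dimension of a subspace meeting every member of \(\cF\). We split according to \(\tau(\cF)=1\) versus \(\tau(\cF)\ge2\). For the second case we use the structure theorem of Ihringer and Kupavskii~\cite{IK2026}: in the range \(n>3k\) with \(k\) large, an intersecting family that is not a star is either contained in a Hilton--Milner-type family \(\cM(E,L)\), or lies in a small explicit list of configurations (e.g.\ families built on a \((k+1)\)- or \(2\)-dimensional kernel), or it has size at most \(c\,q^{-\Omega(k)}\qbinom{n-1}{k-1}\) after removing a bounded-dimensional core.

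\textbf{Case \(\tau=1\).} Then \(\cF\) is contained in the star of a point \(E\). For any point \(P\neq E\), the degree \(d_P(\cF)\) is at most the number of \(k\)-spaces containing the line \(E+P\), which is exactly \(\qbinom{n-2}{k-2}\). Hence only \(E\) can exceed the bound, and we are done.

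\textbf{Case \(\cF\subseteq\cM(E,L)\) or a related configuration.} We compute degrees directly, as in Proposition~\ref{prop:q-kplus2-false}. For \(P\nleq E+L\), every member of \(\cF\) through \(P\) contains \(E\); and the members of the star of \(E\) containing \(P\) and meeting \(L\) form a proper subfamily of the \(k\)-spaces through \(E+P\), giving \(d_P<\qbinom{n-2}{k-2}\). So the points with large degree lie in \(E+L\), which has exactly \(\points{k+1}\) points. This yields at most \(\points{k+1}\) large-degree points, contradicting our assumption. The other exceptional configurations in the Ihringer--Kupavskii list are handled the same way: each has a kernel \(T\) with \(\dim T\le k+1\), and points outside \(T\) have degree at most \(\qbinom{n-2}{k-2}\) by a similar counting argument.

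\textbf{Generic case.} Here the structure theorem gives that \(\cF\) has no point of degree close to \(\qbinom{n-1}{k-1}\). We then double count. Let \(S\) be the set of large-degree points, with \(|S|\ge1+\points{k+1}\). Consider the pairs \((P,F)\) with \(P\in S\) and \(P\le F\in\cF\). The number of such pairs exceeds \(|S|\qbinom{n-2}{k-2}\). On the other hand, each \(F\) contains at most \(\points{k}\) points, and \(|\cF|\) is bounded by the Ihringer--Kupavskii non-star bound, roughly \(O(q^{2k}\qbinom{n-3}{k-3})\)-type, or by the Hilton--Milner bound \(\approx \points{k}\qbinom{n-2}{k-2}\). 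This crude count alone is insufficient. The refinement is to use a "spread" set: choose \(P_1,\dots\in S\) not all inside one \((k+1)\)-space (possible since \(|S|>\points{k+1}\)), and bound \(\sum_{P\in S} d_P\) via the inclusion--exclusion inequality
\[
d_P+d_{P'}\le|\cF|+|\{F\in\cF: P+P'\le F\}|.
\]
The lines \(P+P'\) carry at most \(\qbinom{n-2}{k-2}\) sets each. We then show that \(k+2\) points of \(S\) spanning more than \(k+1\) dimensions cannot all lie in most members, which forces \(|\cF|\) too small when \(n>3k\).

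\textbf{Main obstacle.} I expect the hardest part to be this last quantitative estimate: making the Ihringer--Kupavskii error terms, which are powers of \(q^{-k}\) and ratios like \(\qbinom{n-3}{k-3}/\qbinom{n-2}{k-2}\approx q^{-(n-k)}\), beat the factor \(\points{k+1}\approx q^{k}\). This is exactly where \(n>3k\) and \(k\ge k_0(q)\) are needed. I would first attempt the case analysis with generous constants and only then optimize.
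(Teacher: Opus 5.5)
Your case $\tau(\cF)=1$ and your direct count for $\cF\subseteq\cM(E,L)$ are correct. The remaining cases, however, contain a genuine gap.

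\textbf{The structure theorem is misquoted.} In the form available here (Theorem~\ref{thm:ik-structure}, the case $t=s=1$ of Ihringer--Kupavskii), it gives an intersecting core $\cC$ of points and lines. Only the remainder $\cR=\cF\setminus\cF[\cC]$ is small, namely $|\cR|\le C_0(q)q^{(k-3)(n-k)+6}$. There is no trichotomy ``inside some $\cM(E,L)$ / explicit list with a kernel of dimension at most $k+1$ / small family''. So your second case is unsupported. In your generic case you bound $|\cF|$ globally, although the core-covered part $\cF[\cC]$ can be large.

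\textbf{Global double counting cannot reach the sharp index.} Let $\dim(B_1\cap B_2)=k-1$, let $E\nleq B_1+B_2$, and put $\cF=\{F\in\Gr{V}{k}: E\le F,\ F\cap B_1\ne0\ne F\cap B_2\}\cup\{B_1,B_2\}$. This family lies in no $\cM(E',L)$. It contains all $k$-spaces through $E$ meeting $B_1\cap B_2$, so $|\cF|$ is of order $\points{k-1}\qbinom{n-2}{k-2}$. Hence the double-counting bound $|S|<\points{k}|\cF|/\qbinom{n-2}{k-2}$ is of order $\points{k}\points{k-1}$. This is the same loss that limits Theorem~\ref{thm:q-fw} to the index $\points{k}^2$. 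Yet the high points of this family actually lie in the $k$-space $(E+B_1)\cap(E+B_2)$. Your inclusion--exclusion refinement is not developed into an argument that could detect this localization.

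\textbf{The missing idea is localization by a single non-star member.} By Lemma~\ref{lem:line-classification}, a nonempty core either has a common point $E$, or consists of lines in a $3$-space $U$ without a common point. If $\cC=\emptyset$, then $|\cF|=|\cR|<\qbinom{n-2}{k-2}$ and there are no high points.

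In the first case, add $E$ to $\cC$, so every $B\in\cR$ avoids $E$. Suppose $P\ne E$ and $(E+P)\cap B=0$ for some $B\in\cR$. Then exactly $D=q^{k(k-2)}\qbinom{n-k-2}{k-2}$ of the $\qbinom{n-2}{k-2}$ $k$-spaces through $E+P$ are disjoint from $B$, hence not in $\cF$. So $d_P(\cF)\le\qbinom{n-2}{k-2}-D+|\cR|<\qbinom{n-2}{k-2}$, since $|\cR|<D$. Therefore every high point lies in $\bigcap_{B\in\cR}(E+B)$, a subspace of dimension at most $k+1$. This is precisely where $\points{k+1}$ comes from.

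In the second case, a point $P\nleq U$ has $d_P(\cF)\le\points{3}\qbinom{n-3}{k-3}+|\cR|<\qbinom{n-2}{k-2}$, so at most $\points{3}$ points are high.

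The hypotheses $n>3k$ and $k\ge k_0(q)$ serve only to make the remainder bound smaller than $D$ and smaller than $\frac13\qbinom{n-2}{k-2}$; both ratios are at most $C_0(q)q^{6-(n-k)}$. They are not there to beat a factor $\points{k+1}$ in a double count, as your final paragraph anticipates.
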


\begin{rmk}\label{rmk:thm14-threshold}
The proof of Theorem~\ref{thm:q-kplus2-cq3} invokes the Ihringer--Kupavskii structure theorem (Theorem~\ref{thm:ik-structure} below), which assumes \(n\ge2k+3\).
For \(k\ge3\) this is implied by the linear hypothesis \(n>3k\), so the threshold \(n>3k\) is the only effective hypothesis on the dimension.
The dependence of \(k_0(q)\) on \(q\) is not made explicit; see Lemma~\ref{lem:cq3-estimates}.
\end{rmk}

For larger degree order statistics, the finite-vector-space analog of Huang and Rao's large-\(\ell\) theorem~\cite[Theorem 1.6]{HR2026} again requires a corrected scale.
The relevant examples are the Frankl degree-type families appearing in the work of Ihringer and Kupavskii~\cite[Theorem 4.5 and Section 4.4]{IK2026}.
Given a point \(X\) and an \(i\)-space \(Y_i\) with \(X\cap Y_i=0\), the saturated Frankl--Hilton--Milner family is
\[
  \cG_i
  =
  \{F\in\Gr{V}{k}:X\le F,\ \dim(F\cap Y_i)\ge1\}
  \cup
  \{G\in\Gr{V}{k}:Y_i\le G+X,\ G\cap X=0\}.
\]
Section~\ref{sec:constructions} verifies that \(\cG_i\) is intersecting and that, for every point \(P\le X+Y_i\) with \(P\ne X\),
\[
  d_P(\cG_i)
  =
  \qbinom{n-2}{k-2}
  +
  q^{k-1}\qbinom{n-i-1}{k-i}.
\]
Consequently
\[
  d_{\points{i+1}}(\cG_i)
  =
  \qbinom{n-2}{k-2}
  +
  q^{k-1}\qbinom{n-i-1}{k-i}.
\]
This determines the conjectural upper bound.

\begin{conj}[Corrected large-\(i\) degree conjecture]\label{conj:q-large-i-corrected}
Fix \(q\) and \(\varepsilon>0\). There exists a constant \(C_{\varepsilon,q}\) such that the following holds.
Whenever \(k\) is sufficiently large, \(\varepsilon k\le i\le k\), \(n>C_{\varepsilon,q}\,k\), and \(\cF\subseteq\Gr{V}{k}\) is intersecting, one has
\[
  d_{\points{i+1}}(\cF)
  \le
  \qbinom{n-2}{k-2}
  +
  q^{k-1}\qbinom{n-i-1}{k-i}.
\]
\end{conj}

We do not prove Conjecture~\ref{conj:q-large-i-corrected} in full.
Instead, we prove two necessary conditions on any strict counterexample (Propositions~\ref{prop:anti-clustering-reduction} and~\ref{prop:one-member-loss-reduction}): such a family must either have very large diversity outside a maximum point-star, or contain a single non-star member that captures many noncentral high-degree points.

The remainder of the paper is organized as follows.
In Section~\ref{sec:preliminaries} we collect the vector-space extremal results and elementary estimates used in the proofs.
Sections~\ref{sec:proof-qfw} and~\ref{sec:proof-kplus2} prove the \(q\)-Frankl--Wang bound and the corrected \(k+2\) bound, respectively.
Section~\ref{sec:constructions} gives the sharpness constructions, verifies the properties of \(\cG_i\), and proves the necessary conditions on large-\(i\) counterexamples.
We conclude in Section~\ref{sec:conclusion} by isolating the remaining obstruction to Conjecture~\ref{conj:q-large-i-corrected}.

\section{Preliminaries}
\label{sec:preliminaries}

We recall two standard extremal theorems for finite vector spaces.
The first is the vector-space Erd\H{o}s--Ko--Rado theorem of Hsieh~\cite{Hsieh1975}.

\begin{thm}[Hsieh~\cite{Hsieh1975}]\label{thm:vector-ekr}
Let \(n\ge2k+1\), and let \(\cA\subseteq\Gr{V}{k}\) be intersecting.
Then
\[
  |\cA|\le \qbinom{n-1}{k-1}.
\]
Moreover, equality holds if and only if \(\cA\) is a \(1\)-star, namely all \(k\)-subspaces containing a fixed point.
\end{thm}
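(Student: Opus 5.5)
The plan is to prove the bound spectrally, via the Hoffman ratio bound on the \(q\)-Kneser graph, and then to read off the equality case from the corresponding equality condition.

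\textbf{The graph and its spectrum.} Let \(K\) be the graph on \(\Gr{V}{k}\) in which two \(k\)-spaces are adjacent exactly when they meet trivially. An intersecting family \(\cA\) is precisely an independent set of \(K\). The graph \(K\) lies in the Grassmann association scheme, so its eigenvalues are classical (Delsarte; Frankl--Wilson~\cite{FW1986}):
\[
  \lambda_j=(-1)^j q^{\binom{j}{2}+k(k-j)}\qbinom{n-k-j}{k-j},\qquad 0\le j\le k.
\]
The eigenvalue \(\lambda_j\) belongs to the \(j\)-th eigenspace \(W_j\) of the scheme. The degree is \(\lambda_0=q^{k^2}\qbinom{n-k}{k}\). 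The first step is to check that for \(n\ge 2k+1\) the most negative eigenvalue is \(\lambda_1=-q^{k(k-1)}\qbinom{n-k-1}{k-1}\), attained only by \(j=1\). For this I would compare \(|\lambda_j|/|\lambda_{j+2}|\) term by term, as a ratio of Gaussian coefficients times a power of \(q\). The strict inequality \(n>2k\) is what makes \(\lambda_1\) strictly smaller than every other odd-indexed eigenvalue.

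\textbf{The bound.} Hoffman's ratio bound gives
\[
  |\cA|\le \qbinom{n}{k}\frac{-\lambda_1}{\lambda_0-\lambda_1}.
\]
A direct manipulation of Gaussian coefficients should simplify the right-hand side to \(\qbinom{n-1}{k-1}\). The cleanest route is to verify the identity \(\lambda_0-\lambda_1=-\lambda_1\qbinom{n}{k}/\qbinom{n-1}{k-1}\), which reduces to a \(q\)-Pascal-type identity.

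\textbf{The equality case.} If equality holds in the Hoffman bound, then the characteristic vector \(1_\cA\) lies in \(W_0\oplus W_1\). This space is spanned by the point-star indicators \(\chi_P(F)=[P\le F]\), so
\[
  1_\cA=c+\sum_P c_P\chi_P .
\]
The remaining step is to show that a \(0/1\) vector of this form which is also intersecting must be a single star. This is the step I expect to be the main obstacle.

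\begin{itemize}
\item My first attempt is a local argument. Restrict to a \((k+1)\)-space \(U\). The \(k\)-subspaces of \(U\) are the hyperplanes of \(U\), and the function \(\sum_P c_P\chi_P\) restricted to them is again a linear combination of point indicators. Any two hyperplanes of \(U\) meet, since \(k\ge 2\), so this restriction alone gives no contradiction.
\item The useful restriction is to a \(2k\)-space \(W\). There, \(\cA\cap\Gr{W}{k}\) contains no pair of complementary \(k\)-spaces. Combined with the linear form of \(1_\cA\), this should force all but one of the \(c_P\) to vanish on \(W\), once I compare the values of \(1_\cA\) on a complementary pair and on spaces obtained by swapping one point.
\item Alternatively, I can first show that \(\cA\) has covering number \(\tau(\cA)=1\). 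For this I would use the vector-space Hilton--Milner bound of~\cite{Blokhuis2010}: a family with \(\tau\ge2\) has size strictly less than \(\qbinom{n-1}{k-1}\) for \(n\ge 2k+1\), with the boundary case \(q=2\), \(n=2k+1\) handled by~\cite{WXZ2023}. So a family attaining \(\qbinom{n-1}{k-1}\) is contained in a point-star, and by cardinality it equals that star.
\end{itemize}

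The \(\tau\)-route is the cleanest if the Hilton--Milner bound may be assumed, so I would try it first. I would fall back on the local linear-algebra argument only to keep the proof self-contained.
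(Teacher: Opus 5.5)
The paper gives no proof of this statement; it quotes it from Hsieh, so your argument has to stand on its own. The inequality part does, by a different route: Hoffman's ratio bound on the \(q\)-Kneser graph, which works uniformly in \(q\). With \(\lambda_0=q^{k^2}\qbinom{n-k}{k}\) and \(\lambda_1=-q^{k(k-1)}\qbinom{n-k-1}{k-1}\), the needed identity is \(q^k\points{n-k}+\points{k}=\points{n}\), and the ratio bound comes out to exactly \(\qbinom{n-1}{k-1}\).

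One correction: \(n>2k\) is not what makes \(\lambda_1\) the strict minimum. For \(j\le k-2\), with \(a=n-k-j\) and \(b=k-j\),
\[
  \frac{|\lambda_j|}{|\lambda_{j+2}|}
  =
  q^{2k-2j-1}\,\frac{(q^{a}-1)(q^{a-1}-1)}{(q^{b}-1)(q^{b-1}-1)}>1
\]
already when \(a\ge b\), i.e.\ \(n\ge2k\). The hypothesis \(n\ge2k+1\) matters only for uniqueness, and that is where your plan is thin.

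Your \(\tau\)-route is a valid deduction, but it makes the spectral part irrelevant, because Hilton--Milner alone gives the whole theorem. If \(\bigcap_{A\in\cA}A\ne0\), then \(\cA\) lies in a point-star, so \(|\cA|\le\qbinom{n-1}{k-1}\) with equality only for the full star. Otherwise \(|\cA|\le f(n,k,q)<\qbinom{n-1}{k-1}\). So on that route you are proving the statement as a corollary of a strictly stronger cited theorem.

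Note also that Theorem~\ref{thm:q-hm} as stated needs \(k\ge3\). For \(k=2\), use Lemma~\ref{lem:line-classification}: a non-star family of lines lies in a \(3\)-space, so it has at most \(\points{3}<\points{n-1}\) members since \(n\ge5\). The case \(k=1\) is trivial.

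The self-contained route is not an argument yet, and the local step in your second bullet fails as stated. Let \(H\) be a hyperplane of a \(2k\)-space \(W\). The family \(\{F\in\Gr{W}{k}:F\le H\}\) has no complementary pair, and on \(\Gr{W}{k}\) its indicator is
\[
  q^{-(k-1)}\Bigl(\sum_{P\le H}\chi_P-\points{k-1}\Bigr).
\]
This holds because \(\sum_{P\le H}\chi_P(F)\) equals \(\points{k}\) or \(\points{k-1}\) according as \(F\le H\) or not. So this family has exactly the local linear form and local intersecting property you want to exploit, yet it is not the trace of a point-star on \(W\). It is the \(n=2k\) non-uniqueness reappearing inside \(W\).

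A spectral uniqueness proof must therefore use \(n\ge2k+1\) globally. One possible starting point is the equality consequence that every \(F\notin\cA\) is disjoint from exactly \(-\lambda_1\) members of \(\cA\). Without such an argument, or the Hilton--Milner black box, uniqueness is not established.
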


The next theorem is the vector-space Hilton--Milner theorem.
The result was proved by Blokhuis, Brouwer, Chowdhury, Frankl, Mussche, Patk\'os and Sz\H{o}nyi~\cite{Blokhuis2010} for all \(n\ge2k+1\) except the binary boundary \(q=2,n=2k+1\).
That remaining case follows from the \(q\)-Kneser Kruskal--Katona theorem of Wang, Xu and Zhang~\cite{WXZ2023}.

\begin{thm}[Vector-space Hilton--Milner~\cite{Blokhuis2010,WXZ2023}]\label{thm:q-hm}
Let \(k\ge3\) and \(n\ge2k+1\).
Let \(\cA\subseteq\Gr{V}{k}\) be intersecting and suppose that
\[
  \bigcap_{A\in\cA}A=0.
\]
Then
\[
  |\cA|
  \le
  f(n,k,q),
\]
where
\[
  f(n,k,q)
  =
  \qbinom{n-1}{k-1}
  -
  q^{k(k-1)}\qbinom{n-k-1}{k-1}
  +
  q^k.
\]
\end{thm}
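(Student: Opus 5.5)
The plan follows the covering-number strategy of Blokhuis et al.~\cite{Blokhuis2010}. We may assume \(\cA\) is maximal intersecting, since enlarging \(\cA\) keeps \(\bigcap_{A\in\cA}A=0\) unless it creates a star, and a star already contains a Hilton--Milner-type configuration. Let \(\tau(\cA)\) be the least dimension of a subspace meeting every member of \(\cA\) nontrivially. Because no point lies in all members, \(\tau(\cA)\ge2\). Every member must also meet each \(A_0\in\cA\), so \(\tau(\cA)\le k\). We split into the cases \(\tau(\cA)\ge3\) and \(\tau(\cA)=2\).

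\emph{Case \(\tau(\cA)\ge3\).} We use the standard branching count. Fix \(A_0\in\cA\); every member meets \(A_0\) in some point \(P\). Since \(P\) does not cover \(\cA\), some \(A_1\in\cA\) misses \(P\), and every member through \(P\) meets \(A_1\) in a second point \(Q\). Since the line \(P+Q\) does not cover \(\cA\), every member through \(P+Q\) must also meet a third member. Iterating gives
\[
|\cA|\le \points{k}^{3}\qbinom{n-3}{k-3}.
\]
We then show this is strictly below \(f(n,k,q)\) when \(n\ge2k+1\) and \(k\ge3\). Using
\[
\qbinom{n-1}{k-1}-q^{k(k-1)}\qbinom{n-k-1}{k-1}\approx \points{k}\qbinom{n-2}{k-2},
\]
this reduces to an inequality between ratios of Gaussian binomials, roughly \(\points{k}^2\lesssim q^{n-k}\). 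This comparison fails to be comfortable only near \(n=2k+1\). There we would refine the count by using that \(P\) ranges only over points of \(A_0\) of positive degree and that consecutive branches overlap.

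\emph{Case \(\tau(\cA)=2\).} Let \(E\) be a point of maximum degree. Because \(E\) does not cover \(\cA\), there is \(L\in\cA\) with \(E\nleq L\). Write \(\cA=\cA_E\cup\cA'\), where \(\cA_E\) consists of the members containing \(E\). Each member of \(\cA_E\) meets \(L\). The number of \(k\)-spaces through \(E\) meeting \(L\) trivially is \(q^{k(k-1)}\qbinom{n-k-1}{k-1}\), so
\[
|\cA_E|\le\qbinom{n-1}{k-1}-q^{k(k-1)}\qbinom{n-k-1}{k-1}.
\]
It remains to bound \(\cA'\), the members avoiding \(E\). Each member of \(\cA'\) must meet every member of \(\cA_E\). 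If \(\cA'\) contains two members that span together with \(E\) more than \(k+1\) dimensions, this constraint removes from \(\cA_E\) many more spaces than \(\cA'\) contributes, and the total falls below \(f(n,k,q)\). If instead all of \(\cA'\) lies in a single \((k+1)\)-space \(W\ni E\), then \(|\cA'|\le q^k\), the number of \(k\)-subspaces of \(W\) not through \(E\), and equality gives exactly the Hilton--Milner family \(\cM(E,L)\).

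I expect the main obstacle to be the trade-off in the \(\tau=2\) case. There one must show that members of \(\cA'\) not confined to one \((k+1)\)-space cost more in \(\cA_E\) than they add. The first thing I would try is to count, for each \(G\in\cA'\), the spaces through \(E\) that meet \(L\) but miss \(G\), and use inclusion--exclusion over two generic members of \(\cA'\). The boundary case \(q=2\), \(n=2k+1\) is where these estimates become too weak. For it I would not attempt a direct count and would instead invoke the \(q\)-Kruskal--Katona argument of Wang, Xu and Zhang~\cite{WXZ2023}, via shadows of the family of complements.
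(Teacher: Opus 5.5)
The paper does not prove this statement. It imports it from Blokhuis et al.\ for all cases except \(q=2,\ n=2k+1\), and from Wang--Xu--Zhang for that boundary case; your deferral of the boundary case matches this. As a reconstruction, however, your proposal has two genuine gaps.

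The first gap is the case \(\tau(\cA)\ge3\). The branching bound \(\points{k}^3\qbinom{n-3}{k-3}\) is correct. But your own reduction \(\points{k}^2\lesssim q^{n-k}\) reads \(q^{2k}\lesssim q^{n-k}\), i.e.\ \(n\gtrsim3k\); the failure is not confined to \(n\) near \(2k+1\). Concretely, \(f(n,k,q)\le\points{k}\qbinom{n-2}{k-2}\) by Lemma~\ref{lem:hm-convenient}. So your bound divided by \(f\) is at least \(\points{k}^2\points{k-2}/\points{n-2}\approx q^{3k-n}/(q-1)^2\), which is of order \(q^{k-1}/(q-1)^2\) at \(n=2k+1\). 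Already for \(q=3\), \(k=3\), \(n=7\) one gets \(13^3=2197>1561=f(7,3,3)\). So the whole range \(2k+1\le n\lesssim3k\), whose length grows with \(k\), is left open. Your proposed repairs do not close it. Restricting \(P\) to points of \(A_0\) of positive degree is vacuous, because every point of \(A_0\) lies in \(A_0\in\cA\). And ``consecutive branches overlap'' comes with no estimate that could recover a factor exponential in \(3k-n\).

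The second gap is the case \(\tau(\cA)=2\), which contains all the extremal families; there you only assert the key trade-off. Your argument in this case never uses \(\tau=2\): a maximum-degree point \(E\) and a member \(L\not\ni E\) exist whenever \(\tau\ge2\). So this step restates the whole theorem rather than reducing it.

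Moreover, the strict inequality you assert in the non-confined branch is false for \(k=3\). Let \(S\) be a \(3\)-space and take \(\{F\in\Gr{V}{3}:\dim(F\cap S)\ge2\}\).
\begin{itemize}
\item It is intersecting with \(\tau=2\), and its size is \(\points{3}(\points{n-2}-1)+1=\points{3}(\points{n-2}-q)+q^3=f(n,3,q)\).
\item Its maximum-degree points lie in \(S\).
\item For such an \(E\), the \(q^2(\points{n-2}-1)\) members avoiding \(E\) span \(V\), so they lie in no \(4\)-space through \(E\).
\end{itemize}
Hence there is no slack of the form ``removes many more spaces than \(\cA'\) contributes''; the count must be exact in some configurations.

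The leverage that \(\tau=2\) actually gives is a covering line. For maximal \(\cA\) and \(n\ge2k\), every \(k\)-space through a covering line belongs to \(\cA\). Hence any two covering lines meet, and by Lemma~\ref{lem:line-classification} they either share a point or lie in a common \(3\)-space; the example above is of the second kind. Each of these configurations must then be bounded sharply. Your proposal supplies neither this structural reduction nor the sharp counts.
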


\begin{lem}[Disjoint subspace count]\label{lem:disjoint-subspace-count}
Let \(\dim W=N\), and let \(M\le W\) have dimension \(m\).
Then the number of \(r\)-dimensional subspaces \(R\le W\) satisfying \(R\cap M=0\) is
\[
  q^{mr}\qbinom{N-m}{r}.
\]
\end{lem}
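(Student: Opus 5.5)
Fix a basis-free complement: choose a subspace \(C\le W\) of dimension \(N-m\) with \(W=M\oplus C\), and let \(\pi\colon W\to C\) be the projection along \(M\). The plan is to show that subspaces \(R\) with \(R\cap M=0\) are exactly the graphs of linear maps over their projections, and then count pairs (image, map).

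First, if \(R\cap M=0\) then \(\pi|_R\) is injective, since \(\ker\pi=M\). So \(S:=\pi(R)\) is an \(r\)-dimensional subspace of \(C\), and there is a unique linear map \(\varphi\colon S\to M\) with
\[
  R=\{s+\varphi(s):s\in S\}.
\]
Conversely, for any \(r\)-subspace \(S\le C\) and any linear \(\varphi\colon S\to M\), the graph \(\{s+\varphi(s)\}\) is an \(r\)-dimensional subspace meeting \(M\) trivially. Indeed, if \(s+\varphi(s)\in M\), then \(s\in M\cap C=0\).

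These two constructions are mutually inverse, so they give a bijection between the subspaces \(R\) we want to count and the pairs \((S,\varphi)\). The number of choices of \(S\) is \(\qbinom{N-m}{r}\). For each \(S\), the number of linear maps \(S\to M\) is \(|\mathrm{Hom}(\Fq^r,\Fq^m)|=q^{mr}\). Multiplying gives \(q^{mr}\qbinom{N-m}{r}\).

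No step here is a real obstacle. The only point needing care is injectivity of the correspondence: distinct pairs \((S,\varphi)\) give distinct graphs, because \(R\) determines \(S=\pi(R)\) and then determines \(\varphi\) pointwise. When \(r>N-m\) both sides are \(0\), by the paper's convention on Gaussian binomials.
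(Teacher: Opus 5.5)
Your proof is correct and follows the paper's argument: the paper counts \(r\)-subspaces \(\overline R\) of the quotient \(W/M\) and identifies the lifts disjoint from \(M\) with the \(q^{mr}\) graphs of linear maps \(\overline R\to M\), exactly as you do with \(S\le C\) and \(\varphi\colon S\to M\). Your fixed complement \(C\cong W/M\) simply makes explicit the splitting that the paper's phrase ``graphs of linear maps \(\overline R\to M\)'' uses implicitly.
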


\begin{proof}
Let \(\pi:W\to W/M\) be the quotient map.
If \(R\cap M=0\), then \(\pi(R)\) is an \(r\)-dimensional subspace of \(W/M\).
Conversely, after choosing an \(r\)-subspace \(\overline R\le W/M\), the subspaces \(R\le \pi^{-1}(\overline R)\) that are disjoint from \(M\) and project isomorphically onto \(\overline R\) are the graphs of linear maps \(\overline R\to M\).
There are \(q^{mr}\) such maps for each of the \(\qbinom{N-m}{r}\) choices of \(\overline R\).
\end{proof}

We only use the upper bound in Theorem~\ref{thm:q-hm}; the equality cases are not needed here.
We shall only need the following consequence of the explicit Hilton--Milner bound.

\begin{lem}[Hilton--Milner upper bound]\label{lem:hm-convenient}
Let \(k\ge3\), \(n\ge2k+1\), and put
\[
  T=\qbinom{n-2}{k-2}.
\]
Then
\[
  f(n,k,q)\le \points{k}\,T.
\]
\end{lem}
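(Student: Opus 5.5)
The idea is to read the first two terms of \(f(n,k,q)\) as a count of subspaces meeting a fixed subspace, bound that count by a union bound over points, and then use the overcount in the union bound to absorb the extra \(q^k\).

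\emph{Step 1 (interpretation).} Fix an \((n-1)\)-dimensional space \(W\) and a \(k\)-subspace \(M\le W\); note \(n-1\ge 2k\), so all dimensions fit. Apply Lemma~\ref{lem:disjoint-subspace-count} with \(N=n-1\), \(m=k\), \(r=k-1\). It says that exactly \(q^{k(k-1)}\qbinom{n-k-1}{k-1}\) of the \((k-1)\)-subspaces of \(W\) are disjoint from \(M\). Hence
\[
  \qbinom{n-1}{k-1}-q^{k(k-1)}\qbinom{n-k-1}{k-1}=|\cR|,
  \qquad
  \cR=\{R\in\Gr{W}{k-1}: R\cap M\ne0\}.
\]

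\emph{Step 2 (union bound with its defect).} For each point \(P\le M\), the number of \((k-1)\)-subspaces of \(W\) containing \(P\) is \(\qbinom{n-2}{k-2}=T\). Summing over the \(\points{k}\) points of \(M\) gives
\[
  \points{k}\,T=\sum_{R\in\cR}\points{\dim(R\cap M)} .
\]
A subspace \(R\in\cR\) with \(\dim(R\cap M)=j\) is counted \(\points{j}\) times rather than once. If \(j\ge2\), the overcount is \(\points{j}-1\ge q\). Writing \(\cR_2=\{R\in\cR:\dim(R\cap M)\ge2\}\), we therefore get
\[
  \points{k}\,T\ge |\cR|+q\,|\cR_2|.
\]
So it suffices to show \(q\,|\cR_2|\ge q^k\), that is, \(|\cR_2|\ge q^{k-1}\).

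\emph{Step 3 (lower bound on \(\cR_2\)).} This is the only real point to check; the small case \(k=3\) is the delicate one.

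For \(k\ge4\), fix a line \(\ell\le M\). Every \((k-1)\)-subspace of \(W\) containing \(\ell\) lies in \(\cR_2\), so
\[
  |\cR_2|\ge\qbinom{n-3}{k-3}\ge q^{(k-3)(n-k)}\ge q^{(k-3)(k+1)}.
\]
Since \((k-3)(k+1)\ge k-1\) for \(k\ge4\), this gives \(|\cR_2|\ge q^{k-1}\).

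For \(k=3\), the subspaces \(R\) are lines, and the lines contained in \(M\) all lie in \(\cR_2\). There are \(\qbinom{3}{2}=q^2+q+1\ge q^2=q^{k-1}\) of them.

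In both cases \(f(n,k,q)=|\cR|+q^k\le \points{k}\,T\), which proves the lemma.
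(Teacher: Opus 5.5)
Your proof is correct and is essentially the paper's argument carried out in the quotient \(V/E\). The paper likewise covers the \(E\)-star part of the Hilton--Milner family by the \(\points{k}\) families of \(k\)-spaces through \(E+P\) (\(P\le U\)), and absorbs the extra \(q^k\) through the overcount coming from the \(\points{k}\) subspaces \(E+S\) (\(S\) a hyperplane of \(U\)), each counted \(\points{k-1}\) times.

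Your \(k=3\) device (counting hyperplanes of \(M\)) actually works for every \(k\ge3\): \(M\) has \(\points{k}\ge q^{k-1}\) hyperplanes, each meeting \(M\) in dimension \(k-1\ge2\). So the separate line-through-\(\ell\) count for \(k\ge4\) is unnecessary.
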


\begin{proof}
Let \(E\) be a point and let \(U\) be a \(k\)-subspace with \(E\nleq U\).
The standard vector-space Hilton--Milner construction
\[
  \cM(E,U)
  =
  \{F\in\Gr{V}{k}:E\le F,\ \dim(F\cap U)\ge1\}
  \cup
  \{F\in\Gr{E+U}{k}:E\nleq F\}.
\]
This construction has size \(f(n,k,q)\): the second part has size \(q^k\), since inside the \((k+1)\)-space \(E+U\) these are precisely the complements of the point \(E\); the first part has size
\[
  \qbinom{n-1}{k-1}
  -
  q^{k(k-1)}\qbinom{n-k-1}{k-1},
\]
since after quotienting by \(E\), the excluded members of the \(E\)-star are the \((k-1)\)-subspaces of \(V/E\) disjoint from \((E+U)/E\), and Lemma~\ref{lem:disjoint-subspace-count} gives the subtracted term.
It is therefore enough to prove \(|\cM(E,U)|\le\points{k}T\).

Let
\[
  \cA=\{F\in\Gr{V}{k}:E\le F,\ \dim(F\cap U)\ge1\}.
\]
For each point \(P\le U\), set
\[
  \cA_P=\{F\in\Gr{V}{k}:E+P\le F\}.
\]
Then \(|\cA_P|=T\), and \(\cA\subseteq\bigcup_{P\le U}\cA_P\).
Thus
\[
  \sum_{P\le U}|\cA_P|=\points{k}T.
\]
This cover overcounts \(\cA\).
Indeed, for every hyperplane \(S\le U\), the \(k\)-space \(E+S\) belongs to \(\cA\) and is counted once for each point of \(S\), namely \(\points{k-1}\) times.
The subspaces \(E+S\) are distinct as \(S\) varies over the hyperplanes of \(U\).
Each such subspace therefore contributes at least \(\points{k-1}-1\) to the overcount.
There are \(\points{k}\) hyperplanes in \(U\), so the total overcount is at least
\[
  \points{k}\bigl(\points{k-1}-1\bigr)
  =
  \points{k}\,q\,\points{k-2}
  \ge q^k,
\]
where the identity \(\points{k-1}-1=q\points{k-2}\) follows from the Gaussian recurrence, and the last inequality uses the elementary bound \(\points{m}\ge q^{m-1}\), giving
\[
  \points{k}\,q\,\points{k-2}
  \ge
  q^{k-1}\cdot q\cdot q^{k-3}
  =
  q^{2k-3}
  \ge
  q^{k}
\]
for every \(k\ge3\) and every \(q\ge2\).
Consequently
\[
  |\cA|\le \points{k}T-q^k.
\]
Adding the \(q^k\) members of \(\Gr{E+U}{k}\) not containing \(E\), we obtain
\[
  f(n,k,q)=|\cM(E,U)|\le \points{k}T.
\]
\end{proof}

We also need a simple classification of pairwise intersecting line families.

\begin{lem}[Intersecting line families]\label{lem:line-classification}
Let \(\cF\subseteq\Gr{V}{2}\) be a family of pairwise intersecting lines.
Then either all lines in \(\cF\) contain a common point, or all lines in \(\cF\) lie in a fixed \(3\)-dimensional subspace.
\end{lem}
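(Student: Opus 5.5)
The argument is a direct case analysis on two members of \(\cF\). If \(\cF\) has at most one member the statement is trivial, so assume \(\cF\) contains two distinct lines \(\ell_1,\ell_2\). Since they intersect, \(\ell_1\cap\ell_2\) is nonzero. It is a proper subspace of each line, because the lines are distinct \(2\)-spaces. Hence \(\ell_1\cap\ell_2=P\) is a single point, and \(S:=\ell_1+\ell_2\) has dimension \(2+2-1=3\).

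Now take any third line \(\ell\in\cF\). Suppose \(P\nleq\ell\). We show \(\ell\le S\). The line \(\ell\) meets \(\ell_1\) in a point \(Q_1\) and meets \(\ell_2\) in a point \(Q_2\). Both points differ from \(P\), because \(P\nleq\ell\). We also have \(Q_1\ne Q_2\): otherwise this point would lie in \(\ell_1\cap\ell_2=P\), a contradiction. So \(Q_1+Q_2\) is \(2\)-dimensional and contained in \(\ell\), which forces \(\ell=Q_1+Q_2\le\ell_1+\ell_2=S\).

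Thus every line of \(\cF\) either contains \(P\) or lies in \(S\). If every line contains \(P\), we are done. Otherwise fix a line \(m\in\cF\) with \(P\nleq m\); by the previous paragraph \(m\le S\). It remains to show that every line \(\ell\in\cF\) through \(P\) also lies in \(S\). Suppose \(\ell\) contains \(P\) but \(\ell\nleq S\). Then \(\ell\cap S\) is a proper subspace of \(\ell\) containing \(P\), so \(\ell\cap S=P\). On the other hand, \(\ell\cap m\) is a nonzero subspace of \(m\le S\), so \(\ell\cap m\le\ell\cap S=P\). This gives \(P\le m\), a contradiction. Hence all lines of \(\cF\) lie in the \(3\)-space \(S\).

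The only delicate step is this final mixing case, where the family contains lines both through \(P\) and avoiding \(P\). It is resolved by using the dimension count inside \(S\) together with the intersecting property against the single line \(m\). Everything else is elementary dimension counting with Grassmann's formula.
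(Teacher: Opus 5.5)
Your proof is correct and follows essentially the same route as the paper: fix two lines meeting in a point \(P\), set \(S=\ell_1+\ell_2\), show that lines avoiding \(P\) lie in \(S\) because they meet \(\ell_1,\ell_2\) in distinct points, and use one line \(m\) avoiding \(P\) to force every line through \(P\) into \(S\). The only difference is that you spell out this last step (\(\ell\nleq S\) would give \(\ell\cap S=P\) and hence \(P\le m\)), which the paper states in a single sentence.
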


\begin{proof}
If \(|\cF|\le1\), then the assertion is trivial.
Choose two distinct lines \(L_1,L_2\in\cF\), and let \(P=L_1\cap L_2\).
If every line in \(\cF\) contains \(P\), we are done.
Otherwise choose \(L_3\in\cF\) with \(P\nleq L_3\).
Since \(L_3\) meets both \(L_1\) and \(L_2\), it lies in the \(3\)-dimensional subspace
\[
  W=L_1+L_2.
\]
Let \(L\in\cF\).
If \(P\le L\), then \(L\) must meet \(L_3\); hence \(L\le W\).
If \(P\nleq L\), then \(L\) meets \(L_1\) and \(L_2\) in two distinct points, so again \(L\le W\).
Thus all lines of \(\cF\) lie in \(W\).
\end{proof}

Shan and Zhou~\cite{SZ2024} proved a finite-vector-space \(d\)-degree Erd\H{o}s--Ko--Rado theorem, giving sharp upper bounds for the minimum degree over \(d\)-dimensional subspaces.
Their result is conceptually close to the degree viewpoint of the present paper, but the arguments below use only the Hilton--Milner theorem and the structural theorem of Ihringer and Kupavskii.

The proof of the corrected \(k+2\) theorem uses the following \(t=1,s=1\) consequence of a structural theorem of Ihringer and Kupavskii.
It is stated in the exact form needed below: the controlling family consists of points and lines, and all members not controlled by this family are placed in a small remainder.

\begin{thm}[Ihringer--Kupavskii~\cite{IK2026}]\label{thm:ik-structure}
Fix \(q\), and let \(k\ge3\), \(n\ge2k+3\).
If \(\cA\subseteq\Gr{V}{k}\) is intersecting, then there is an intersecting family \(\cC\) of points and lines such that, writing
\[
  \cA[\cC]
  =
  \{A\in\cA:C\le A\text{ for some }C\in\cC\},
\]
one has
\[
  |\cA\setminus\cA[\cC]|
  \le
  C_0(q)q^{(k-3)(n-k)+6},
\]
where
\[
  C_0(q)
  =
  \left(\frac q{q-1}\right)^4
  \left(1+\frac{q+1}{q^2-q-1}\right)^2.
\]
The constant \(C_0(q)\) depends only on \(q\), and in particular does not depend on \(k\) or \(n\).
\end{thm}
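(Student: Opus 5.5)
Since Theorem~\ref{thm:ik-structure} is quoted from~\cite{IK2026}, the plan is to reconstruct its \(t=s=1\) case by a spread-approximation (peeling) argument rather than derive it from anything else in this paper. Write \(T_j=\qbinom{n-j}{k-j}\) for the number of \(k\)-spaces containing a fixed \(j\)-space; note that \(T_3\approx q^{(k-3)(n-k)}\) up to a factor depending only on \(q\). Fix a threshold \(\tau\) of order \(q^{c}T_3\), with \(c\) a small absolute constant to be tuned. Let \(\cC\) be a minimal family of points and lines chosen greedily: first every line \(\ell\) with \(|\cA(\ell)|\ge\tau\), where \(\cA(\ell)=\{A\in\cA:\ell\le A\}\), and then every point \(P\) not on a chosen line whose residual degree in \(\cA\setminus\cA[\cC]\) is still at least a larger threshold \(\tau'\approx q^{c'}\,q^{n-k}\tau\). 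The intended picture is that members containing a heavy point or line are "explained", and the remainder should look like a family with no heavy subspace of dimension at most \(2\).

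The first step is to show that \(\cC\) is intersecting. Suppose \(C_1,C_2\in\cC\) with \(C_1\cap C_2=0\). Then \(\cA(C_1)\) and \(\cA(C_2)\) are cross-intersecting, so fix \(F\in\cA(C_1)\) with \(F\cap C_2=0\). Every member of \(\cA(C_2)\) must meet \(F\) in a point, so
\[
  |\cA(C_2)|\le\points{k}\,T_{\dim C_2+1}.
\]
This bound is below the threshold for \(C_2\) once \(\tau\) and \(\tau'\) are fixed with suitable powers of \(q\) and \(n\ge2k+3\). If instead every member of \(\cA(C_1)\) meets \(C_2\), then \(\cA(C_1)\) is covered by the families \(\cA(C_1+R)\) for points \(R\le C_2\); these have higher-dimensional kernels, and their total size again falls below the threshold. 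Either way we reach a contradiction, so \(\cC\) is intersecting.

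The second step bounds the remainder \(\cR=\cA\setminus\cA[\cC]\), which by construction has every point-degree below \(\tau'\) and every line-degree below \(\tau\). I would run a branching (Frankl kernel) argument of depth three. Fix \(B_1\in\cR\); every \(A\in\cR\) meets \(B_1\) in some point \(P\). Inside \(\cR(P)\), choose \(B_2\) avoiding a suitable subspace; then every member of \(\cR(P)\) contains a line \(P+R\) with \(R\le B_2\). Branching once more produces a \(3\)-space, so
\[
  |\cR|\le(\text{number of branches})\cdot T_3.
\]
The naive branch count is \(\points{k}^3\approx q^{3k}\), which is far too large. The key refinement is to use the low-degree hypothesis to avoid paying the factor \(\points{k}\) at each level. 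For this, sum degrees over points of \(B_1\) and use that heavy points and lines were already absorbed into \(\cC\). The intention is to replace the factor \(\points{k}\) by a \(q\)-dependent geometric series
\[
  \sum_j q^{-j}\le\frac q{q-1}
\]
at each of the relevant steps. This is consistent with the four factors \((q/(q-1))\) in \(C_0(q)\), while the factor \(\bigl(1+\frac{q+1}{q^2-q-1}\bigr)^2\) and the \(q^6\) would come from the two line-level branchings.

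I expect this last accounting to be the main obstacle: obtaining \(C_0(q)q^{6}\) with no dependence on \(k\) requires choosing the thresholds \(\tau,\tau'\) self-consistently, so that each branching loses only a factor \(O_q(1)\) rather than \(\points{k}\). The first thing I would try is a weighted double count in place of a plain union bound. Within \(\cR(P)\), count pairs \((A,\ell)\) with \(P\le\ell\le A\cap B_2\), and use \(|\cR(\ell)|<\tau\). Then compare against the lower bound on the number of such lines forced by intersection, which is where the hypothesis \(n\ge2k+3\) enters.
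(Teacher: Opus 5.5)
\paragraph{Verdict.} Your proposal tries to re-derive the Ihringer--Kupavskii structure theorem from scratch. In the paper, this statement is only a parameter specialization of Theorem~1.9 of~\cite{IK2026}:
\begin{itemize}
\item with $t=s=1$, the remainder exponent $(k-t)(n-k)-(s+1)(n-k-t-s-1)$ becomes $(k-1)(n-k)-2(n-k-3)=(k-3)(n-k)+6$;
\item the hypothesis $n\ge 2k+s+2=2k+3$ selects the constant $\bigl(\frac q{q-1}\bigr)^{s+3}\bigl(1+\frac{q+1}{q^2-q-1}\bigr)^2=C_0(q)$;
\item the returned core is intersecting, with members of dimension between $t=1$ and $s+t=2$, i.e.\ points and lines.
\end{itemize}
A from-scratch proof is legitimate in principle. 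But your first step is incorrect as stated, and there is a genuine gap exactly where the theorem has its content.

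\paragraph{Step~1 fails.} When some $F\in\cA(C_1)$ avoids a line $C_2$, you only get $|\cA(C_2)|\le\points{k}T_3$. Since $\points{k}\ge q^{k-1}$, this exceeds your threshold $\tau\approx q^{c}T_3$ by a factor of at least $q^{k-1-c}$. The hypothesis $n\ge 2k+3$ plays no role in this comparison. The same factor is lost for points, where $\points{k}T_2$ would have to beat $\tau'\approx q^{c+c'}T_2$.

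The conclusion itself fails for small $c$. Take a $5$-space $W$. The family $\{F\in\Gr{V}{k}:\dim(F\cap W)\ge3\}$ is intersecting, every line of $W$ lies in at least $q^2T_3$ of its members, and $W$ contains disjoint lines. So for $c\le2$ your greedy core is not intersecting. Separately, because you exclude points lying on chosen lines, $\cR$ need not have small degree at those points.

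\paragraph{Step~2 is left open.} Raising the thresholds to about $\points{k}T_3$ and $\points{k}T_2$ would rescue Step~1. But then the line-degree hypothesis is no stronger than the trivial third-level branching bound, and your Step~2 gives only $|\cR|\le\points{k}^3T_3$. The target $C_0(q)q^{(k-3)(n-k)+6}$ is $\Theta_q(q^6T_3)$, so this is off by roughly $q^{3k-9}$.

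Removing the three factors $\points{k}$ is the whole difficulty, and your proposal does not do it. The ``weighted double count'' is never specified. Matching the factors of $C_0(q)$ to branching levels is numerology, not a derivation.

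\paragraph{What is missing.} You would need a global spreadness argument of the kind Ihringer and Kupavskii develop, which controls cross-intersecting pieces with no heavy restrictions without paying $\points{k}$ per level. Short of supplying that, the correct route is to invoke their Theorem~1.9 and check the parameters as above.
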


\begin{proof}[Derivation from Ihringer--Kupavskii]
In the notation of Ihringer and Kupavskii~\cite[Theorem 1.9]{IK2026}, the theorem bounds the remainder by
\[
  Cq^{(k-t)(n-k)-(s+1)(n-k-t-s-1)}.
\]
Substituting \(t=1\) and \(s=1\) gives the exponent
\[
  (k-1)(n-k)-2(n-k-3)=(k-3)(n-k)+6.
\]
Since \(n\ge2k+s+2=2k+3\), the second value of the constant in their theorem applies, namely
\[
  C=\left(\frac q{q-1}\right)^{s+3}
  \left(1+\frac{q+1}{q^2-q-1}\right)^2=C_0(q).
\]
Since \(t=1\), the nonempty members of the returned core have dimension at least \(1\), while Theorem~1.9 gives dimension at most \(s+t=2\); the core may therefore be regarded as an intersecting family of points and lines.
(The degenerate case in which the core is empty is treated as \(\cC=\emptyset\) in the proof of Theorem~\ref{thm:q-kplus2-cq3}, where it forces \(\cR=\cF\) and yields a direct bound on \(|\cF|\).)
For the parameter range used below, \(n>3k\) and \(k\ge3\), so \(n\ge2k+3\) and the displayed hypotheses are satisfied.
The next lemma records the precise inequalities involving the remainder that are used in the proof of Theorem~\ref{thm:q-kplus2-cq3}.
\end{proof}

\begin{lem}[Estimates in the range \(n>3k\)]\label{lem:cq3-estimates}
Fix \(q\).
There exists \(k_0=k_0(q)\) such that for all \(k\ge k_0\) and all \(n>3k\), with
\[
  T=\qbinom{n-2}{k-2},
  \qquad
  D=q^{k(k-2)}\qbinom{n-k-2}{k-2},
\]
we have
\[
  C_0(q)q^{(k-3)(n-k)+6}<D,
  \qquad
  C_0(q)q^{(k-3)(n-k)+6}<\frac{T}{3},
\]
and
\[
  \points{3}\qbinom{n-3}{k-3}<\frac{T}{3}.
\]
\end{lem}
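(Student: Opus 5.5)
The plan is to compare everything with the $q$-powers of leading order. We use the standard estimates $q^{a(b-a)}\le\qbinom{b}{a}\le c_q\,q^{a(b-a)}$, where $c_q=\prod_{j\ge1}(1-q^{-j})^{-1}\le 4$ depends only on $q$. These give
\[
  T=\qbinom{n-2}{k-2}\ge q^{(k-2)(n-k)},
  \qquad
  D\ge q^{k(k-2)}\,q^{(k-2)(n-2k)}=q^{(k-2)(n-k)}.
\]
Thus both $T$ and $D$ are at least $q^{(k-2)(n-k)}$, and the first two inequalities reduce to showing
\[
  3C_0(q)\,q^{(k-3)(n-k)+6}<q^{(k-2)(n-k)},
\]
that is, $3C_0(q)\,q^{6}<q^{n-k}$. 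Since $n>3k$ gives $n-k>2k$, it suffices to have $q^{2k}>3C_0(q)q^6$. This holds as soon as $k$ exceeds a threshold depending only on $q$, because $C_0(q)$ depends only on $q$; indeed $C_0(q)\le 2^4(1+3)^2=256$ for all $q\ge2$.

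For the third inequality we use the ratio identity
\[
  \qbinom{n-3}{k-3}\Big/\qbinom{n-2}{k-2}=\frac{q^{k-2}-1}{q^{n-2}-1}\le q^{k-n},
\]
which follows directly from the product formula. Hence
\[
  \points{3}\qbinom{n-3}{k-3}\le (q^2+q+1)\,q^{-(n-k)}\,T<(q^2+q+1)\,q^{-2k}\,T,
\]
and this is less than $T/3$ once $q^{2k}>3(q^2+q+1)$. That already holds for small $k$.

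Finally, take $k_0(q)$ to be the maximum of the two thresholds, and also at least $3$ so that the Ihringer--Kupavskii hypotheses are met.

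The only point needing care is the lower bound on $D$. It uses the disjoint-count normalisation: $\qbinom{n-k-2}{k-2}\ge q^{(k-2)(n-2k)}$ holds because each factor satisfies $(q^{a}-1)/(q^{b}-1)\ge q^{a-b}$ for $a\ge b$. We also need $n-k-2\ge k-2$ so that the binomial is nonzero, which is true for $n>3k$. I expect no genuine obstacle; the argument is bookkeeping of exponents.
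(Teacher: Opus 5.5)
Your proposal is correct and follows the paper's proof essentially step for step. The same lower bound $\qbinom{a}{b}\ge q^{b(a-b)}$ gives $T,D\ge q^{(k-2)(n-k)}$, so the first two inequalities reduce to $3C_0(q)q^{6}<q^{n-k}$ with $n-k>2k$, and the third uses the same ratio $\points{k-2}/\points{n-2}$. Your bound $q^{k-n}$ on that ratio is marginally sharper than the paper's $q^{k-n+1}/(q-1)$, and your explicit estimate $C_0(q)\le 256$ is a harmless extra; neither changes the argument.
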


\begin{proof}
We use the elementary lower bound
\[
  \qbinom{a}{b}\ge q^{b(a-b)}.
\]
Thus
\[
  T\ge q^{(k-2)(n-k)}
\]
and
\[
  D
  \ge
  q^{k(k-2)}q^{(k-2)(n-2k)}
  =
  q^{(k-2)(n-k)}.
\]
Therefore
\[
  \frac{C_0(q)q^{(k-3)(n-k)+6}}{T}
  \le
  C_0(q)q^{6-(n-k)}
  \le
  C_0(q)q^{6-2k},
\]
and the same estimate holds with \(D\) in place of \(T\).
For fixed \(q\), this is smaller than \(1/3\), and also smaller than \(1\), once \(k\) is sufficiently large.

Finally,
\[
  \frac{\qbinom{n-3}{k-3}}{\qbinom{n-2}{k-2}}
  =
  \frac{\points{k-2}}{\points{n-2}}.
\]
Hence
\[
  \points{3}\frac{\qbinom{n-3}{k-3}}{T}
  =
  \points{3}\frac{\points{k-2}}{\points{n-2}}
  \le
  \frac{\points{3}}{q-1}q^{k-n+1}
  <
  \frac{\points{3}}{q-1}q^{-2k+1},
\]
which is also smaller than \(1/3\) for all sufficiently large \(k\).
\end{proof}

\section{The \texorpdfstring{\(q\)-Frankl--Wang}{q-Frankl--Wang} bound}
\label{sec:proof-qfw}

In this section we prove Theorem~\ref{thm:q-fw}. Write
\[
  T=\qbinom{n-2}{k-2},
  \qquad
  H=\{P\le V:\dim P=1,\ d_P(\cF)>T\}.
\]
It is enough to show
\[
  |H|<\points{k}^{2},
\]
because then \(|H|\le\points{k}^{2}-1\), which is equivalent to
\[
  d_{\points{k}^{2}}(\cF)\le T.
\]

\begin{proof}[Proof of Theorem~\ref{thm:q-fw}]
We first settle \(k=2\).
Then \(T=\qbinom{n-2}{0}=1\), so a high point is a point incident with at least two lines of \(\cF\).
If all lines of \(\cF\) contain a common point \(E\), then every point \(P\ne E\) lies on at most one line of \(\cF\).
Thus the only possible high point is \(E\).
Otherwise, by Lemma~\ref{lem:line-classification}, all lines of \(\cF\) lie in a \(3\)-dimensional subspace \(W\).
Then every high point lies in \(W\), so
\[
  |H|\le \points{3}<\points{2}^{2}=\points{k}^{2}.
\]

Assume now that \(k\ge3\).
If
\[
  \bigcap_{F\in\cF}F\ne0,
\]
let \(C=\bigcap_{F\in\cF}F\).
If \(\dim C=1\), say \(C=E\), then for every point \(P\ne E\),
\[
  d_P(\cF)\le |\{F\in\Gr{V}{k}:E+P\le F\}|=\qbinom{n-2}{k-2}=T.
\]
Thus \(H\subseteq\{E\}\).
If \(\dim C\ge2\), then, writing \(c=\dim C\), for every point \(P\le C\) we have
\[
  d_P(\cF)=|\cF|\le \qbinom{n-c}{k-c}\le T.
\]
Indeed, \(\cF\subseteq\{F\in\Gr{V}{k}:C\le F\}\).
For every point \(P\nleq C\),
\[
  d_P(\cF)\le |\{F\in\Gr{V}{k}:C+P\le F\}|
  \le \qbinom{n-3}{k-3}\le T.
\]
Hence \(H=\emptyset\).

It remains to consider the case
\[
  \bigcap_{F\in\cF}F=0.
\]
By Theorem~\ref{thm:q-hm} and Lemma~\ref{lem:hm-convenient},
\[
  |\cF|\le f(n,k,q)\le \points{k}T.
\]
Now double-count incidences \((P,F)\) with \(P\in H\), \(F\in\cF\), and \(P\le F\).
Since every point in \(H\) has degree greater than \(T\),
\[
  |H|T
  <
  \sum_{P\in H}d_P(\cF).
\]
On the other hand, every \(k\)-subspace contains exactly \(\points{k}\) points, so
\[
  \sum_{P\in H}d_P(\cF)
  \le
  \points{k}|\cF|
  \le
  \points{k}^2T.
\]
Therefore
\[
  |H|<\points{k}^2.
\]
Thus \(|H|\le\points{k}^{2}-1\), since \(|H|\) is an integer, and therefore
\[
  d_{\points{k}^{2}}(\cF)\le T.
\]
Here we use the elementary equivalence \(d_r(\cF)\le T\) if and only if at most \(r-1\) points have degree greater than \(T\).
Finally,
\[
  \points{2k}
  =
  \points{k}(q^k+1),
\]
while
\[
  \points{k}=\frac{q^k-1}{q-1}<q^k+1.
\]
Thus
\[
  \points{k}^2<\points{2k},
\]
so the index \(1+\points{2k}\) is larger than \(\points{k}^{2}\).
The claimed consequence
\[
  d_{1+\points{2k}}(\cF)\le T.
\]
follows from the monotonicity of the ordered point-degrees.
\end{proof}

\section{The corrected \texorpdfstring{\(k+2\)}{k+2} bound}
\label{sec:proof-kplus2}

In this section we prove Theorem~\ref{thm:q-kplus2-cq3}.

\begin{proof}[Proof of Theorem~\ref{thm:q-kplus2-cq3}]
Choose \(k_0=k_0(q)\) as in Lemma~\ref{lem:cq3-estimates}, and increase it if necessary so that \(k_0\ge3\).
Let \(k\ge k_0\), \(n>3k\), and let \(\cF\subseteq\Gr{V}{k}\) be intersecting.
Put
\[
  T=\qbinom{n-2}{k-2},
  \qquad
  H=\{P\le V:\dim P=1,\ d_P(\cF)>T\}.
\]
It is enough to show that \(|H|\le\points{k+1}\).

By Theorem~\ref{thm:ik-structure}, there is an intersecting family \(\cC\) of points and lines such that, writing
\[
  \cR=\cF\setminus \cF[\cC],
\]
we have
\[
  |\cR|
  \le
  C_0(q)q^{(k-3)(n-k)+6}.
\]
If \(\cC=\emptyset\), then \(\cR=\cF\), and Lemma~\ref{lem:cq3-estimates} gives
\[
  d_P(\cF)\le|\cF|=|\cR|<T
\]
for every point \(P\).
Thus \(H=\emptyset\), and we are done.

Since \(\cC\) is intersecting and every member of \(\cC\) has dimension \(1\) or \(2\), the following elementary dichotomy holds.
Either all members of \(\cC\) contain a common point \(E\), or all members of \(\cC\) are lines and lie in a common \(3\)-dimensional subspace \(U\).
Indeed, if a point belongs to \(\cC\), then it is the only point of \(\cC\), and every line of \(\cC\) contains it.
If \(\cC\) consists only of pairwise intersecting lines, then either they have a common point or, by Lemma~\ref{lem:line-classification}, they lie in a \(3\)-dimensional subspace.

First suppose that all members of \(\cC\) contain a common point \(E\).
Adding \(E\) to \(\cC\), if necessary, preserves the fact that \(\cC\) is intersecting and can only decrease \(\cR\).
Thus we may assume \(E\in\cC\).
Then
\[
  \cF[\cC]\subseteq\{F\in\Gr{V}{k}:E\le F\},
\]
and no member of \(\cR\) contains \(E\).

If \(\cR=\emptyset\), then for every point \(P\ne E\),
\[
  d_P(\cF)\le |\{F\in\Gr{V}{k}:E+P\le F\}|=T,
\]
so \(H\subseteq\{E\}\).
Assume from now on that \(\cR\ne\emptyset\).

Let \(P\ne E\) be a point.
Suppose that there exists \(B\in\cR\) such that
\[
  (E+P)\cap B=0.
\]
We decompose
\[
  d_P(\cF)
  =
  d_P(\cF[\cC])+d_P(\cR).
\]
Every member of \(\cF[\cC]\) containing \(P\) contains the plane \(E+P\).
Among the \(T\) \(k\)-subspaces containing this plane, precisely
\[
  D=q^{k(k-2)}\qbinom{n-k-2}{k-2}
\]
are disjoint from \(B\).
Indeed, after quotienting by \(E+P\), this is the number of \((k-2)\)-subspaces of an \((n-2)\)-space disjoint from a fixed \(k\)-subspace; by Lemma~\ref{lem:disjoint-subspace-count}, it equals
\[
  q^{k(k-2)}\qbinom{n-k-2}{k-2}.
\]
None of these \(D\) subspaces can lie in \(\cF\), because each is disjoint from \(B\in\cF\).
Thus \(d_P(\cF[\cC])\le T-D\), while \(d_P(\cR)\le|\cR|\).
Therefore
\[
  d_P(\cF)
  \le
  T-D+|\cR|
  <
  T
\]
by Lemma~\ref{lem:cq3-estimates}.
Consequently, if \(P\in H\) and \(P\ne E\), then
\[
  (E+P)\cap B\ne0
  \qquad\text{for every }B\in\cR.
\]
Since no \(B\in\cR\) contains \(E\), this implies
\[
  P\le E+B
  \qquad\text{for every }B\in\cR.
\]
Hence
\[
  H\subseteq
  \Gr{\bigcap_{B\in\cR}(E+B)}{1}.
\]
The right-hand side is the set of points of a subspace of dimension at most \(k+1\).
Thus
\[
  |H|\le\points{k+1}.
\]

It remains to consider the second case.
Thus \(\cC\) consists of lines contained in a fixed \(3\)-dimensional subspace \(U\), and these lines have no common point.
Then every member of \(\cF[\cC]\) contains some line \(L\in\cC\), and hence meets \(U\) in dimension at least \(2\).
Let \(P\nleq U\) be a point.
The number of \(k\)-subspaces containing \(P\) and meeting \(U\) in dimension at least \(2\) is at most
\[
  \points{3}\qbinom{n-3}{k-3},
\]
because one may first choose a line \(L\le U\) and then count the \(k\)-subspaces containing \(P+L\).
This is only an upper bound, since the same \(k\)-subspace may contain more than one line of \(U\).
Adding the remainder gives
\[
  d_P(\cF)
  \le
  \points{3}\qbinom{n-3}{k-3}+|\cR|
  <
  T
\]
by Lemma~\ref{lem:cq3-estimates}.
So every high point lies in \(U\), and hence
\[
  |H|\le\points{3}\le\points{k+1}.
\]

In all cases \(|H|\le\points{k+1}\).
Equivalently,
\[
  d_{1+\points{k+1}}(\cF)\le T=\qbinom{n-2}{k-2}.
\]
\end{proof}

\section{Extremal examples and \texorpdfstring{large-\(i\)}{large-i} reductions}
\label{sec:constructions}

In this section we give the sharpness constructions that calibrate the degree indices used in the introduction, and we prove two necessary conditions that any counterexample to Conjecture~\ref{conj:q-large-i-corrected} must satisfy.
First, a vector-space Hilton--Milner family shows that the naive \(q\)-analogue of the set-theoretic \(k+2\) theorem has the wrong index and that the corrected index \(1+\points{k+1}\) is sharp.
Second, the saturated Frankl--Hilton--Milner family \(\cG_i\) shows that the second term in Conjecture~\ref{conj:q-large-i-corrected} must be \(q^{k-1}\qbinom{n-i-1}{k-i}\).
Finally, two necessary conditions on a strict counterexample to the large-\(i\) conjecture are established: such a family must either have very large diversity~\cite{Kupavskii2018diversity} outside a maximum point-star or exhibit strong localization of its noncentral high points.

\subsection{The naive \texorpdfstring{\(k+2\)}{k+2} analogue and sharpness}

The same phenomenon that invalidates the literal \(2k+1\) index also affects the set-theoretic \(k+2\) theorem of Huang and Rao.
The most direct \(q\)-analogue would replace \(k+2\) by \(\points{k}+2\), but this is still too small.

\begin{prop}[The naive \(q\)-analog of the \(k+2\) theorem is false]\label{prop:q-kplus2-false}
For every \(k\ge2\) and every \(n\ge k+1\), there is an intersecting family \(\cF\subseteq\Gr{V}{k}\) such that
\[
  d_{\points{k}+2}(\cF)
  >
  \qbinom{n-2}{k-2}.
\]
Consequently no constant \(C_q\) can make the bound
\[
  d_{\points{k}+2}(\cF)\le \qbinom{n-2}{k-2}
\]
valid for all \(n>C_q\,k\).
\end{prop}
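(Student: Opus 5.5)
The plan is to take the vector-space Hilton--Milner family from the introduction,
\[
  \cM(E,L)=\{F\in\Gr{V}{k}:E\le F,\ \dim(F\cap L)\ge1\}\cup\{G\in\Gr{E+L}{k}:E\nleq G\},
\]
with \(E\) a point and \(L\) a \(k\)-space, \(E\nleq L\). The hypothesis \(n\ge k+1\) is exactly what is needed for such a pair to exist. I will show that \emph{every} point of the \((k+1)\)-space \(E+L\) has degree strictly greater than \(T=\qbinom{n-2}{k-2}\). Then at least \(\points{k+1}\) points exceed \(T\), and it remains to compare \(\points{k+1}\) with \(\points{k}+2\).

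First I check that \(\cM(E,L)\) is intersecting. Two members containing \(E\) meet in \(E\). Two members of the second part are distinct hyperplanes of the \((k+1)\)-space \(E+L\), so they meet in dimension \(k-1\ge1\). For a mixed pair \(F\ni E\) with \(\dim(F\cap L)\ge1\) and \(G\le E+L\) a hyperplane, note that \(F\cap(E+L)\) contains \(E\) and a point of \(L\), so it has dimension at least \(2\). Since \(G\) is a hyperplane of \(E+L\), it meets this subspace nontrivially.

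Next come the degree computations, separating \(P\ne E\) from \(P=E\).

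For a point \(P\le E+L\) with \(P\ne E\), the line \(E+P\) lies in \(E+L\). By dimension counting it meets the hyperplane \(L\) of \(E+L\) in a point. Hence every \(k\)-space containing \(E+P\) belongs to the first part, which already gives \(T\) members through \(P\). In addition, among the \(\points{k}\) hyperplanes of \(E+L\) through \(P\), exactly \(\points{k-1}\) contain \(E\). So \(q^{k-1}\ge1\) members of the second part contain \(P\), and therefore \(d_P\ge T+q^{k-1}>T\).

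For \(P=E\), fix any point \(Q\le L\); all \(T\) spaces containing \(E+Q\) lie in the first part. Since \(k\ge2\), there is a hyperplane \(S\) of \(L\) with \(Q\nleq S\). Then \(E+S\) extended to a \(k\)-space avoiding \(Q\) (indeed \(E+S\) itself has dimension \(k\)) is a further member containing \(E\) but not \(Q\). Hence \(d_E>T\).

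Finally I check the count. We have \(\points{k+1}=q\points{k}+1\), so
\[
  \points{k+1}-(\points{k}+2)=(q-1)\points{k}-1=q^k-2\ge 2
\]
for \(k\ge2\). Thus at least \(\points{k}+2\) points have degree greater than \(T\), which gives \(d_{\points{k}+2}(\cM(E,L))>T\). This construction works for every \(n\ge k+1\), in particular for all \(n>C_q k\), so the "consequently" clause follows immediately.

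There is no real obstacle here. The only step needing care is the degree of the center \(E\): one must exhibit a member containing \(E\) outside a single \((E+Q)\)-star, and this is where \(k\ge2\) is used.
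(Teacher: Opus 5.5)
Your proposal is correct and follows the paper's proof essentially step for step: the same Hilton--Milner family $\cM(E,L)$, the same bound $d_P\ge T+q^{k-1}$ for $P\ne E$ coming from the $q^{k-1}$ hyperplanes of $E+L$ through $P$ that avoid $E$, and the same device for the centre, namely exhibiting one extra member through $E$ (your $E+S$, a hyperplane of $E+L$ through $E$ missing $Q$) to get $d_E>T$. The only step you left implicit is that $Q\nleq E+S$; it follows because $(E+S)\cap L=S$ by a dimension count inside $E+L$.
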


\begin{proof}[Proof of Proposition~\ref{prop:q-kplus2-false}]
Choose a point \(E\le V\) and a \(k\)-subspace \(L\le V\) with \(E\nleq L\), and put \(W=E+L\).
Thus \(\dim W=k+1\).
Consider the standard vector-space Hilton--Milner family
\[
  \cM(E,L)
  =
  \{F\in\Gr{V}{k}:E\le F,\ \dim(F\cap L)\ge1\}
  \cup
  \{G\in\Gr{W}{k}:E\nleq G\}.
\]
It is intersecting.
Indeed, two members of the first part both contain \(E\), two members of the second part are hyperplanes of the \((k+1)\)-space \(W\) and hence meet, and if \(F\) is in the first part while \(G\) is in the second part, then \(F\) contains \(E+Q\) for some point \(Q\le L\), so
\[
  \dim(G\cap(E+Q))\ge \dim G+\dim(E+Q)-\dim W=k+2-(k+1)=1.
\]

Let \(T=\qbinom{n-2}{k-2}\).
We claim that every point \(P\le W\) has degree larger than \(T\) in \(\cM(E,L)\).
For \(P=E\), fix a point \(Q\le L\).
All \(k\)-subspaces containing the plane \(E+Q\) belong to the first part of \(\cM(E,L)\), giving \(T\) members.
Choose another point \(Q'\le L\) with \(Q'\ne Q\), which exists because \(k\ge2\).
There is a \(k\)-subspace \(F\le V\) such that \(E+Q'\le F\) and \(Q\nleq F\); for instance, take a hyperplane of the \((k+1)\)-space \(W=E+L\) containing \(E+Q'\) but not \(Q\).
Then \(F\) belongs to the first part of \(\cM(E,L)\), but it is not among the \(T\) subspaces containing \(E+Q\).
Thus \(d_E(\cM(E,L))>T\).
Now let \(P\le W\) with \(P\ne E\).
The plane \(E+P\) meets \(L\) in a point, so every \(k\)-subspace of \(V\) containing \(E+P\) belongs to the first part of \(\cM(E,L)\).
There are exactly \(T\) such \(k\)-subspaces.
In addition, the hyperplanes of \(W\) containing \(P\) but not \(E\) contribute new members from the second part.
Their number is
\[
  \points{k}-\points{k-1}
  =
  q^{k-1}.
\]
Hence
\[
  d_P(\cM(E,L))
  =
  T+q^{k-1}
  >
  T
  \qquad\text{for every }P\le W,\ P\ne E.
\]
Thus all \(\points{k+1}\) points of \(W\) have degree greater than \(T\).
Since
\[
  \points{k+1}
  =
  \points{k}+q^k
  \ge
  \points{k}+2
  \qquad (k\ge2),
\]
the \((\points{k}+2)\)-nd largest point-degree is still greater than \(T\).
\end{proof}

\begin{cor}[Sharpness of the corrected \(k+2\) index]\label{cor:kplus2-index-sharp}
For every \(k\ge2\) and every \(n\ge k+1\), there exists an intersecting family \(\cF\subseteq\Gr{V}{k}\) such that
\[
  d_{\points{k+1}}(\cF)
  >
  \qbinom{n-2}{k-2}.
\]
Hence the index \(1+\points{k+1}\) in Theorem~\ref{thm:q-kplus2-cq3} cannot be replaced by \(\points{k+1}\).
\end{cor}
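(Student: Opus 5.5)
The corollary is a direct reading of the family \(\cM(E,L)\) built in the proof of Proposition~\ref{prop:q-kplus2-false}, so I would reuse that construction rather than build a new one. Fix \(k\ge2\) and \(n\ge k+1\). Choose a point \(E\) and a \(k\)-subspace \(L\) with \(E\nleq L\), put \(W=E+L\), and take \(\cF=\cM(E,L)\). That proof already shows \(\cF\) is intersecting.

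The one fact needed is the degree count established there. Every point \(P\le W\) has degree greater than \(T=\qbinom{n-2}{k-2}\). For \(P\ne E\) the degree is exactly \(T+q^{k-1}\). For \(P=E\) the degree exceeds \(T\), because \(\cF\) contains all \(T\) subspaces through \(E+Q\) together with at least one further member. The subspace \(W\) has exactly \(\points{k+1}\) points, so at least \(\points{k+1}\) points have degree greater than \(T\).

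Now apply the elementary equivalence used in the proof of Theorem~\ref{thm:q-fw}: \(d_r(\cF)\le T\) holds if and only if at most \(r-1\) points have degree greater than \(T\). With \(r=\points{k+1}\), at least \(r\) points exceed \(T\), so \(d_{\points{k+1}}(\cF)>T\). This shows that the index \(1+\points{k+1}\) in Theorem~\ref{thm:q-kplus2-cq3} cannot be lowered to \(\points{k+1}\) for any \(n\). In particular it cannot be lowered in the range \(n>3k\), \(k\ge k_0\), where the theorem holds.

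There is no real obstacle; the work is already done in Proposition~\ref{prop:q-kplus2-false}. The only points to check are the side conditions on the construction. The point \(E\) with \(E\nleq L\) exists because \(n\ge k+1\). The proposition's argument that \(d_E>T\) needs a second point \(Q'\le L\), and this exists because \(k\ge2\).
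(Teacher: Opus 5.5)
Your proposal is correct and matches the paper's proof: the paper also reuses the family \(\cM(E,L)\) from the proof of Proposition~\ref{prop:q-kplus2-false}, notes that all \(\points{k+1}\) points of \(W=E+L\) have degree exceeding \(\qbinom{n-2}{k-2}\), and concludes \(d_{\points{k+1}}(\cM(E,L))>\qbinom{n-2}{k-2}\). Your checks of the side conditions are also accurate: \(n\ge k+1\) guarantees a point \(E\nleq L\), and \(k\ge2\) guarantees the second point \(Q'\le L\).
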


\begin{proof}[Proof of Corollary~\ref{cor:kplus2-index-sharp}]
The family \(\cM(E,L)\) constructed in the proof of Proposition~\ref{prop:q-kplus2-false} has all \(\points{k+1}\) points of the \((k+1)\)-space \(W=E+L\) of degree greater than \(T=\qbinom{n-2}{k-2}\).
Therefore
\[
  d_{\points{k+1}}(\cM(E,L))>T,
\]
which proves the assertion.
\end{proof}

\subsection{The saturated Frankl--Hilton--Milner family}

For larger degree order statistics, the unsaturated Hilton--Milner extrapolation does not give the sharp bound.
The right construction is the following Frankl degree-type family.

\begin{defn}[The saturated Frankl--Hilton--Milner family]\label{def:Gi}
Let \(2\le i\le k\).
Fix a point \(X\le V\) and an \(i\)-dimensional subspace \(Y_i\le V\) with \(X\cap Y_i=0\).
Put
\[
  \cG_i=\cG_i^\Delta\cup\cG_i^\gamma,
\]
where
\[
  \cG_i^\Delta
  =
  \{F\in\Gr{V}{k}:X\le F,\ \dim(F\cap Y_i)\ge1\},
\]
and
\[
  \cG_i^\gamma
  =
  \{G\in\Gr{V}{k}:Y_i\le G+X,\ G\cap X=0\}.
\]
\end{defn}

\begin{prop}[Point degrees in \(\cG_i\)]\label{prop:Gi-point-degrees}
Let \(2\le i\le k\), and let \(\cG_i\) be as in Definition~\ref{def:Gi}.
Then \(\cG_i\) is intersecting.
Moreover, for every point \(P\le X+Y_i\) with \(P\ne X\),
\[
  d_P(\cG_i)
  =
  \qbinom{n-2}{k-2}
  +
  q^{k-1}\qbinom{n-i-1}{k-i}.
\]
\end{prop}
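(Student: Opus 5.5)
The plan has two parts: first the intersecting property, then the degree count, which is split into the contributions of \(\cG_i^\Delta\) and \(\cG_i^\gamma\).

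\textbf{Intersecting property.} There are three pairings to check.

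Two members of \(\cG_i^\Delta\) meet because both contain \(X\).

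Two members \(G,G'\in\cG_i^\gamma\) meet for the following reason. Pass to \(\overline V=V/X\) and write bars for images. Since \(G\cap X=0\), the image \(\overline G\) has dimension \(k\), and \(\overline{Y_i}\le\overline G\) with \(\dim\overline{Y_i}=i\ge2\). Hence \(\dim(\overline G\cap\overline{G'})\ge i\ge 2\). Now \(G\cap G'\) is the kernel of the map \((g,g')\mapsto g-g'\), which sends \(G\oplus G'\) into \(G+G'\). Counting dimensions gives \(\dim(G\cap G')=\dim(G+X)\cap(G'+X)-1\ge(i+1)-1\ge1\). More simply, \((G+X)\cap(G'+X)\) contains \(Y_i+X\), of dimension \(i+1\ge3\). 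The intersection \(G\cap G'\) has codimension at most \(2\) inside \((G+X)\cap(G'+X)\), since \(G\) and \(G'\) are hyperplanes of \(G+X\) and \(G'+X\) respectively. So \(\dim(G\cap G')\ge i-1\ge1\).

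For \(F\in\cG_i^\Delta\) and \(G\in\cG_i^\gamma\): \(F\) contains \(X+Q\) for some point \(Q\le Y_i\), and \(X+Q\le G+X\). Since \(G\) is a hyperplane of \(G+X\), the plane \(X+Q\) meets \(G\) nontrivially.

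\textbf{Degree count.} Fix a point \(P\le X+Y_i\) with \(P\ne X\).

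The plane \(X+P\) lies in \(X+Y_i\) and meets \(Y_i\) in a point. Hence the members of \(\cG_i^\Delta\) through \(P\) are exactly the \(k\)-spaces containing \(X+P\), and there are \(\qbinom{n-2}{k-2}\) of them.

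No member of \(\cG_i^\gamma\) contains \(X\), so the two parts are disjoint and it remains to count the members of \(\cG_i^\gamma\) through \(P\). Parametrize \(G\) as a hyperplane of the \((k+1)\)-space \(S=G+X\), where \(S\) ranges over the \((k+1)\)-spaces containing \(X+Y_i\) and \(G\) over the hyperplanes of \(S\) not containing \(X\). Distinct pairs \((S,G)\) give distinct \(G\), since \(S=G+X\) is determined by \(G\).

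The number of choices of \(S\) is \(\qbinom{n-i-1}{k-i}\). For each fixed \(S\), the hyperplanes of \(S\) containing \(P\) but not \(X\) number \(\points{k}-\points{k-1}=q^{k-1}\); this is exactly the computation already done in the proof of Proposition~\ref{prop:q-kplus2-false}, using \(P\ne X\) and \(P\le S\).

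Multiplying gives \(q^{k-1}\qbinom{n-i-1}{k-i}\) members of \(\cG_i^\gamma\) through \(P\). Adding the \(\cG_i^\Delta\) contribution yields the stated formula.

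The only delicate step is making the bijection between \(\cG_i^\gamma\) and the pairs \((S,G)\) precise, including the disjointness of \(\cG_i^\Delta\) and \(\cG_i^\gamma\); everything else is the hyperplane count already used for \(\cM(E,L)\).
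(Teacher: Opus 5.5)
Your proof is correct and follows essentially the paper's route: the same split into \(\cG_i^\Delta\) (exactly the \(k\)-spaces through the plane \(X+P\)) and \(\cG_i^\gamma\) parametrized by the \((k+1)\)-space \(G+X\supseteq X+Y_i\). Your count \(\points{k}-\points{k-1}=q^{k-1}\) of hyperplanes through \(P\) avoiding \(X\) is equivalent to the paper's count of complement maps fixed on \(\pi(P)\). One small fix: delete the sentence claiming \(\dim(G\cap G')=\dim\bigl((G+X)\cap(G'+X)\bigr)-1\), which fails when \(X\le G+G'\) (e.g.\ two distinct hyperplanes of the same \(S\)); only the codimension-at-most-\(2\) bound you give next holds, and it suffices.
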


\begin{proof}[Proof of Proposition~\ref{prop:Gi-point-degrees}]
The family \(\cG_i^\Delta\) is a subfamily of the \(X\)-star, so it is intersecting.
If \(G,G'\in\cG_i^\gamma\), then \(G+X\) and \(G'+X\) both contain \(Y_i\).
Since \(G\cap X=G'\cap X=0\), each of \(G\) and \(G'\) maps onto \(Y_i\) under the quotient map \(V\to V/X\).
In particular, \(G\) and \(G'\) each meet \(X+Y_i\) in an \(i\)-subspace complementary to \(X\); two such \(i\)-subspaces inside the \((i+1)\)-space \(X+Y_i\) meet in dimension at least \(i-1\), which is nonzero for \(i\ge2\).
Thus \(G\cap G'\ne0\).
Finally, if \(F\in\cG_i^\Delta\) and \(G\in\cG_i^\gamma\), then \(F\) contains \(X\) and also a point of \(Y_i\).
Hence \(F\cap(X+G)\) has dimension at least \(2\).
Since \((X+G)/G\) is one-dimensional, this forces \(F\cap G\ne0\).

Now fix a point \(P\le X+Y_i\), \(P\ne X\).
The contribution from \(\cG_i^\Delta\) is exactly the number of \(k\)-subspaces containing the plane \(X+P\), namely
\[
  \qbinom{n-2}{k-2}.
\]
Indeed, any such \(k\)-subspace belongs to \(\cG_i^\Delta\), and every member of \(\cG_i^\Delta\) containing \(P\) contains \(X+P\).

It remains to count the contribution from \(\cG_i^\gamma\).
Let \(\pi:V\to V/X\) be the quotient map.
A member \(G\in\cG_i^\gamma\) corresponds to a \(k\)-subspace \(\overline G=\pi(G)\) of \(V/X\) containing \(\pi(Y_i)\), together with a choice of a complement \(G\) to \(X\) inside \(\pi^{-1}(\overline G)\).
There are
\[
  \qbinom{n-i-1}{k-i}
\]
choices for \(\overline G\).
For a fixed \(\overline G\), the complements to \(X\) in \(\pi^{-1}(\overline G)\) are graphs of linear maps \(\overline G\to X\), hence there are \(q^k\) of them.
The additional requirement \(P\le G\) fixes the value of this linear map on the one-dimensional subspace \(\pi(P)\), and leaves \(q^{k-1}\) choices.
Thus
\[
  |\{G\in\cG_i^\gamma:P\le G\}|
  =
  q^{k-1}\qbinom{n-i-1}{k-i},
\]
which gives the stated degree formula.
\end{proof}

\begin{cor}[Sharpness of the large-\(i\) bound]\label{cor:Gi-sharpness}
Let \(2\le i\le k\), let \(n\ge2k+1\), and let \(\cG_i\) be as in Definition~\ref{def:Gi}.
Put
\[
  T=\qbinom{n-2}{k-2},
  \qquad
  Q_i=q^{k-1}\qbinom{n-i-1}{k-i}.
\]
Then
\[
  d_{\points{i+1}}(\cG_i)=T+Q_i.
\]
\end{cor}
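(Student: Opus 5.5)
We need to show that \(d_{\points{i+1}}(\cG_i)=T+Q_i\). By Proposition~\ref{prop:Gi-point-degrees}, the \(\points{i+1}-1\) points of \(X+Y_i\) other than \(X\) all have degree exactly \(T+Q_i\). The plan is to show two things.

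\begin{enumerate}
\item[(a)] \(d_X(\cG_i)\ge T+Q_i\). Then at least \(\points{i+1}\) points have degree at least \(T+Q_i\), which gives \(d_{\points{i+1}}(\cG_i)\ge T+Q_i\).
\item[(b)] Every point \(P\nleq X+Y_i\) has degree at most \(T+Q_i\). Together with the equalities on \(X+Y_i\setminus\{X\}\), this gives the reverse inequality, because the top \(\points{i+1}\) degrees then include \(\points{i+1}-1\) values equal to \(T+Q_i\) and one further value \(d_X\). Any point outside \(X+Y_i\) does not exceed \(T+Q_i\).
\end{enumerate}

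Note that (b) really is needed as stated. If \(d_X\ge T+Q_i\), then exactly \(\points{i+1}\) points have degree at least \(T+Q_i\) among the points of \(X+Y_i\). So \(d_{\points{i+1}}\) equals \(T+Q_i\) provided no outside point exceeds \(T+Q_i\).

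\textbf{Step (a).} Members of \(\cG_i^\gamma\) avoid \(X\), so \(d_X(\cG_i)=|\cG_i^\Delta|\). This counts \(k\)-spaces through \(X\) meeting \(Y_i\), namely
\[
\qbinom{n-1}{k-1}-q^{i(k-1)}\qbinom{n-i-1}{k-1}
\]
by Lemma~\ref{lem:disjoint-subspace-count} applied in \(V/X\). Rather than compare formulas, I would argue by containment. Fix a point \(Q\le Y_i\). The \(T\) spaces containing \(X+Q\) all lie in \(\cG_i^\Delta\). Then exhibit at least \(Q_i\) further members: for each \(G\in\cG_i^\gamma\) containing \(Q\) (there are \(Q_i\) of them, by the count in Proposition~\ref{prop:Gi-point-degrees} applied to the point \(Q\)), choose a \(k\)-space through \(X\) meeting \(Y_i\) but not containing \(Q\). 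An injection is cleaner: map each \(G\in\cG_i^\gamma\) with \(P\le G\) for a fixed second point \(P\le Y_i\), \(P\neq Q\), to a distinct member of \(\cG_i^\Delta\) through \(X+P\) avoiding \(Q\). That is the fiddly part.

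A simpler alternative is direct estimation. Members of \(\cG_i^\Delta\) containing \(X+Q'\) for some \(Q'\le Y_i\) number at least \(\points{i}T-\binom{\points{i}}{2}\qbinom{n-3}{k-3}\) by inclusion–exclusion (Bonferroni). Subtracting \(T\) leaves \((\points{i}-1)T-\binom{\points i}{2}\qbinom{n-3}{k-3}\), which should dominate \(Q_i\le q^{k-1}\qbinom{n-3}{k-2}\cdot\)(small) using \(n\ge2k+1\) and the ratio \(\qbinom{n-3}{k-3}/T=\points{k-2}/\points{n-2}\). I expect this to be the main obstacle, since the inequality must hold for all \(2\le i\le k\) including \(i=2\) and small \(n\). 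For \(i=k\) we have \(Q_i=q^{k-1}\), which is easy; for \(i=2\) it is tightest. I would try the injection first and fall back on the estimate.

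\textbf{Step (b).} Let \(P\nleq X+Y_i\). Members of \(\cG_i^\Delta\) through \(P\) contain \(X+P\) and meet \(Y_i\), so there are at most \(T\) of them. For \(\cG_i^\gamma\), \(\pi(P)\notin\pi(Y_i)\), so \(\overline G\) must contain the \((i+1)\)-space \(\pi(Y_i)+\pi(P)\). This gives \(\qbinom{n-i-2}{k-i-1}\) choices, times \(q^{k-1}\) lifts through \(P\), and the result is \(\le Q_i\). Hence \(d_P\le T+Q_i\), which completes the argument.
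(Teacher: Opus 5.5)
Your overall structure is the paper's: Proposition~\ref{prop:Gi-point-degrees} handles the \(q\points{i}\) points of \(X+Y_i\) other than \(X\), a lower bound \(d_X(\cG_i)\ge T+Q_i\) supplies one more point, and an upper bound handles points outside \(X+Y_i\). Your Step (b) is correct, and you are right that the non-strict bound suffices there.

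The gap is Step (a). You leave it unexecuted, and both routes you sketch break down at \(i=2\) (the Bonferroni route once \(k\ge3\)), a case the statement covers. The reason is that for \(i=2\) there is no slack at all. Applying the \(q\)-Pascal rule \(\qbinom{a}{b}=\qbinom{a-1}{b-1}+q^{b}\qbinom{a-1}{b}\) twice to your own exact formula gives
\[
  d_X(\cG_2)=\qbinom{n-1}{k-1}-q^{2(k-1)}\qbinom{n-3}{k-1}
  =T+q^{k-1}\qbinom{n-3}{k-2}=T+Q_2 ,
\]
so any lossy estimate must fail. Concretely:
\begin{itemize}
\item The \(k\)-spaces containing \(X+P\) but not \(Q\) number \(T-\qbinom{n-3}{k-3}=q^{k-2}\qbinom{n-3}{k-2}=Q_2/q\). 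So the injection from a single second point \(P\) cannot exist, since its domain has size \(Q_2\).
\item Your Bonferroni bound evaluates to \(T+Q_2-\tfrac{q(q-1)}{2}\qbinom{n-3}{k-3}\), which is strictly less than \(T+Q_2\) for \(k\ge3\).
\end{itemize}

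The missing idea is to use a whole line through \(Q\) rather than one extra point. Fix a \(2\)-space \(Y_2\le Y_i\) through \(Q\). For each of the \(q\) points \(P\le Y_2\) with \(P\ne Q\), the \(k\)-spaces containing \(X+P\) but not \(Q\) lie in \(\cG_i^\Delta\), and there are \(q^{k-2}\qbinom{n-3}{k-2}\) of them. These \(q\) families are pairwise disjoint, because a \(k\)-space in two of them contains \(Y_2\) and hence \(Q\). They are also disjoint from the \(T\) spaces through \(X+Q\). Hence
\[
  d_X(\cG_i)\ge T+q^{k-1}\qbinom{n-3}{k-2}.
\]
Now let \(\pi:V\to V/X\) be the quotient map. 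The \(k\)-spaces of \(V/X\) containing \(\pi(Y_i)\) form a subfamily of those containing \(\pi(Y_2)\), so \(\qbinom{n-i-1}{k-i}\le\qbinom{n-3}{k-2}\). This gives \(d_X(\cG_i)\ge T+Q_i\).

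This is exactly the paper's argument. The paper phrases the first count as counting \((k-1)\)-subspaces of \(V/X\) meeting \(\pi(Y_2)\), split by the dimension of the intersection.

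For \(i\ge3\) your single-point count alone would already suffice, since then \(q\qbinom{n-i-1}{k-i}\le\qbinom{n-3}{k-2}\) for \(n\ge2k+1\). The obstruction is only \(i=2\).
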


\begin{proof}[Proof of Corollary~\ref{cor:Gi-sharpness}]
Let \(Z=X+Y_i\).
By Proposition~\ref{prop:Gi-point-degrees}, every point \(P\le Z\) with \(P\ne X\) has degree \(T+Q_i\) in \(\cG_i\).
There are
\[
  \points{i+1}-1=q\points{i}
\]
such points.

We next check the degree of \(X\).
Choose a \(2\)-dimensional subspace \(Y_2\le Y_i\).
The members of \(\cG_i\) containing \(X\) are precisely the members of \(\cG_i^\Delta\), and they include all \(k\)-subspaces containing \(X\) and meeting \(Y_2\).
In the quotient \(V/X\), this latter number is the number of \((k-1)\)-subspaces meeting a fixed \(2\)-subspace.
Counting according to whether the intersection has dimension \(1\) or \(2\), and using the convention \(\qbinom{a}{b}=0\) for \(b<0\), this number is
\[
  \points{2}q^{k-2}\qbinom{n-3}{k-2}
  +
  \qbinom{n-3}{k-3}.
\]
Using \(\points{2}=q+1\) and the Gaussian recurrence
\[
  \qbinom{n-2}{k-2}
  =
  \qbinom{n-3}{k-3}
  +
  q^{k-2}\qbinom{n-3}{k-2},
\]
we get
\[
  d_X(\cG_i)
  \ge
  T+q^{k-1}\qbinom{n-3}{k-2}.
\]
On the other hand, in \(V/X\), the number of \(k\)-subspaces containing the fixed \(i\)-subspace \(\pi(Y_i)\) is
\[
  \qbinom{n-i-1}{k-i},
\]
while the number of \(k\)-subspaces containing the fixed \(2\)-subspace \(\pi(Y_2)\) is
\[
  \qbinom{n-3}{k-2}.
\]
Since \(\pi(Y_2)\le\pi(Y_i)\), the former number is at most the latter:
\[
  \qbinom{n-i-1}{k-i}\le \qbinom{n-3}{k-2}.
\]
Therefore \(d_X(\cG_i)\ge T+Q_i\).

It remains to show that no point outside \(Z\) has degree as large as \(T+Q_i\).
Let \(P\nleq Z\).
Then the plane \(X+P\) is disjoint from \(Y_i\).
The contribution from \(\cG_i^\Delta\) is the number of \(k\)-subspaces containing \(X+P\) and meeting \(Y_i\), namely
\[
  \qbinom{n-2}{k-2}
  -
  q^{i(k-2)}\qbinom{n-i-2}{k-2}
  <
  T.
\]
Here the subtracted term is counted by applying Lemma~\ref{lem:disjoint-subspace-count} in the quotient by \(X+P\).
For the \(\gamma\)-part, if \(i<k\), then a member \(G\in\cG_i^\gamma\) containing \(P\) projects to a \(k\)-subspace of \(V/X\) containing both \(\pi(Y_i)\) and \(\pi(P)\), and the complement map is fixed on \(\pi(P)\).
Thus this contribution is at most
\[
  q^{k-1}\qbinom{n-i-2}{k-i-1}
  <
  q^{k-1}\qbinom{n-i-1}{k-i}
  =
  Q_i.
\]
If \(i=k\), the \(\gamma\)-part contributes nothing for \(P\nleq Z\).
Hence \(d_P(\cG_i)<T+Q_i\) for every \(P\nleq Z\).

There are exactly \(\points{i+1}\) points in \(Z\): the point \(X\), whose degree is at least \(T+Q_i\), and the other \(\points{i+1}-1\) points, whose degrees are exactly \(T+Q_i\).
All points outside \(Z\) have smaller degree.
The point \(X\) may have degree larger than \(T+Q_i\), but this does not affect the \(\points{i+1}\)-th largest point-degree.
Thus
\[
  d_{\points{i+1}}(\cG_i)=T+Q_i.
\]
\end{proof}

The construction is due to Ihringer and Kupavskii~\cite{IK2026}; we have included the intersecting verification, point-degree calculation, and sharpness check above for completeness.
Proposition~\ref{prop:Gi-point-degrees} shows that the previously natural guess
\[
  \qbinom{n-2}{k-2}
  +
  q^{k-i}\qbinom{n-i-1}{k-i}
\]
is too small by a factor \(q^{i-1}\) in the second term.
The source of this factor is that, in the \(\gamma\)-part of \(\cG_i\), fixing the quotient \(k\)-subspace still leaves \(q^k\) complement maps, and requiring a member to pass through a specified point fixes only one one-dimensional value, leaving \(q^{k-1}\) choices.

\subsection{Necessary conditions on \texorpdfstring{large-\(i\)}{large-i} counterexamples}

We do not prove Conjecture~\ref{conj:q-large-i-corrected} in full.
The case \(i=2\) is tied to the corrected \(k+2\) phenomenon above; we state the conditions below for \(i\ge3\).
The following proposition records a first incidence reduction.
It says that if the conjectured degree bound fails, then either the non-star part already exceeds the lower diversity threshold in the cross-intersecting theorem of Ihringer and Kupavskii~\cite[Theorem 4.7]{IK2026}, or the high points are forced to cluster inside a single non-star member.
For a subspace \(B\le V\) and a set \(H\) of points, we write \(|B\cap H|\) for the number of points \(P\in H\) with \(P\le B\).

\begin{prop}[Diversity-or-clustering obstruction]\label{prop:anti-clustering-reduction}
Let \(3\le i\le k\), let \(V\) be \(n\)-dimensional with \(n\ge k+1\), let \(\cF\subseteq\Gr{V}{k}\) be intersecting, and put
\[
  T=\qbinom{n-2}{k-2},
  \qquad
  Q_i=q^{k-1}\qbinom{n-i-1}{k-i},
  \qquad
  \Gamma_i=qQ_i=q^k\qbinom{n-i-1}{k-i}.
\]
Let \(X\le V\) be a point of maximum degree in \(\cF\), and write
\[
  \cB=\cF\setminus\cF[X].
\]
Assume that
\[
  d_{\points{i+1}}(\cF)>T+Q_i.
\]
Let
\[
  S_X=\{P\le V:\dim P=1,\ P\ne X,\ d_P(\cF)>T+Q_i\}.
\]
Then \(|S_X|\ge\points{i+1}-1=q\points{i}\).
Moreover, for every subset \(H\subseteq S_X\) with
\[
  |H|=\points{i+1}-1=q\points{i},
\]
at least one of the following alternatives holds:
\[
  |\cB|>\Gamma_i,
\]
or there is a member \(B_0\in\cB\) such that
\[
  |B_0\cap H|>\points{i}.
\]
\end{prop}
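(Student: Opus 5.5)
The plan is a double-counting argument that compares point-degrees in the star part \(\cF[X]\) with those in the non-star part \(\cB\), in the same spirit as the proof of Theorem~\ref{thm:q-fw}. The lower bound on \(|S_X|\) comes first and is immediate. The hypothesis \(d_{\points{i+1}}(\cF)>T+Q_i\) says that at least \(\points{i+1}\) points have degree greater than \(T+Q_i\). At most one of these points is \(X\), so at least \(\points{i+1}-1=q\points{i}\) of them lie in \(S_X\).

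For the dichotomy, fix \(H\subseteq S_X\) with \(|H|=q\points{i}\). I argue by contradiction, assuming both \(|\cB|\le\Gamma_i\) and \(|B\cap H|\le\points{i}\) for every \(B\in\cB\). For each \(P\in H\), split
\[
  d_P(\cF)=d_P(\cF[X])+d_P(\cB).
\]
Since \(P\ne X\), every member of \(\cF[X]\) through \(P\) contains the plane \(X+P\). Hence \(d_P(\cF[X])\le\qbinom{n-2}{k-2}=T\), exactly as in the proof of Theorem~\ref{thm:q-fw}. Because \(P\in S_X\), this forces \(d_P(\cB)>Q_i\) for every \(P\in H\).

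Now double-count incidences \((P,B)\) with \(P\in H\), \(B\in\cB\), and \(P\le B\). Summing over points and using the previous step gives
\[
  \sum_{P\in H}d_P(\cB)>|H|\,Q_i=q\points{i}Q_i.
\]
Summing over members instead and using the two assumptions gives
\[
  \sum_{P\in H}d_P(\cB)=\sum_{B\in\cB}|B\cap H|\le\points{i}\,|\cB|\le\points{i}\,\Gamma_i=q\points{i}Q_i.
\]
These two bounds are incompatible, so at least one of the two alternatives must hold.

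There is no real obstacle in this argument. The one point to check is that the strict inequality survives the summation. It does, because each of the \(|H|\) summands \(d_P(\cB)\) exceeds \(Q_i\) strictly and \(H\) is nonempty. The normalization \(\Gamma_i=qQ_i\) is chosen precisely so that the two counts meet at the common value \(q\points{i}Q_i\). The argument does not actually use that \(X\) has maximum degree, nor the assumption \(i\ge3\); both hypotheses matter only for how the proposition is used later.
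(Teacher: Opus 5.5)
Your proposal is correct and follows the paper's proof essentially step for step. The shared steps are the bound \(d_P(\cF[X])\le T\) via the plane \(X+P\), the resulting strict inequality \(d_P(\cB)>Q_i\) on \(H\), and the double count of incidences between \(H\) and \(\cB\), with both sides meeting at \(\points{i}\Gamma_i=q\points{i}Q_i\). Your side remark is also accurate: the paper uses the maximality of \(d_X\) only to place \(X\) among the high points, and this is not needed to get \(|S_X|\ge q\points{i}\).
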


\begin{proof}[Proof of Proposition~\ref{prop:anti-clustering-reduction}]
Since \(d_{\points{i+1}}(\cF)>T+Q_i\), there are at least \(\points{i+1}\) points of \(V\) whose degrees in \(\cF\) are larger than \(T+Q_i\).
The point \(X\) has maximum degree, so \(X\) is one of these high points.
Consequently, after removing \(X\), there remain at least
\[
  \points{i+1}-1=q\points{i}
\]
points \(P\ne X\) with \(d_P(\cF)>T+Q_i\).
Thus \(|S_X|\ge q\points{i}\), as claimed.

Let \(H\subseteq S_X\) with \(|H|=q\points{i}\).
For each \(P\in H\), the members of the star part \(\cF[X]\) that also contain \(P\) must contain the plane \(X+P\).
Hence
\[
  d_P(\cF[X])
  \le
  \qbinom{n-2}{k-2}
  =
  T.
\]
Since \(P\in S_X\), it follows that
\[
  d_P(\cB)
  =
  d_P(\cF)-d_P(\cF[X])
  >
  (T+Q_i)-T
  =
  Q_i.
\]

Now double-count incidences \((P,B)\) with \(P\in H\), \(B\in\cB\), and \(P\le B\).
The strict inequality in the next display comes from the strict counterexample assumption \(d_{\points{i+1}}(\cF)>T+Q_i\), which gave \(d_P(\cB)>Q_i\) for every \(P\in H\).
The preceding inequality gives
\[
  \sum_{P\in H}d_P(\cB)
  >
  |H|Q_i
  =
  q\points{i}Q_i
  =
  \points{i}\Gamma_i.
\]
On the other hand,
\[
  \sum_{P\in H}d_P(\cB)
  =
  \sum_{B\in\cB}|B\cap H|.
\]
Suppose that the first alternative fails, so that \(|\cB|\le\Gamma_i\), and that the second alternative also fails, so that \(|B\cap H|\le\points{i}\) for every \(B\in\cB\).
Then
\[
  \sum_{B\in\cB}|B\cap H|
  \le
  \points{i}|\cB|
  \le
  \points{i}\Gamma_i,
\]
contradicting the strict lower bound above.
Therefore at least one of the two stated alternatives must hold.
\end{proof}

We next strengthen the reduction in the bounded-diversity range.

\begin{prop}[One-member loss reduction]\label{prop:one-member-loss-reduction}
Let \(3\le i\le k\), let \(n\ge2k\), and let \(\cF\subseteq\Gr{V}{k}\) be intersecting.
Put
\[
  T=\qbinom{n-2}{k-2},
  \qquad
  Q_i=q^{k-1}\qbinom{n-i-1}{k-i},
  \qquad
  R_{n,k}=q^{k(k-2)}\qbinom{n-k-2}{k-2}.
\]
Let \(X\le V\) be a point of maximum degree in \(\cF\), and write
\[
  \cB=\cF\setminus\cF[X].
\]
If
\[
  |\cB|\le R_{n,k}+Q_i,
\]
then
\[
  d_{\points{i+1}}(\cF)\le T+Q_i.
\]
Equivalently, every strict counterexample to Conjecture~\ref{conj:q-large-i-corrected} satisfies
\[
  |\cF\setminus\cF[X]|>R_{n,k}+Q_i.
\]
\end{prop}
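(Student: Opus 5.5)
The plan is to reuse the mechanism from the proof of Theorem~\ref{thm:q-kplus2-cq3}, with the threshold \(T+Q_i\) in place of \(T\) and \(R_{n,k}\) playing the role of \(D\). Assume \(|\cB|\le R_{n,k}+Q_i\), and let \(S_X\) be the set of points \(P\ne X\) with \(d_P(\cF)>T+Q_i\). Since \(X\) contributes at most one point to the high set, it suffices to show \(|S_X|<q\points{i}=\points{i+1}-1\). If \(\cB=\emptyset\), every \(P\ne X\) satisfies \(d_P(\cF)\le T\), so \(S_X=\emptyset\). Assume from now on that \(\cB\neq\emptyset\).

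\textbf{Step 1 (one-member loss).} Fix \(P\ne X\), and suppose some \(B\in\cB\) satisfies \((X+P)\cap B=0\). Every member of \(\cF[X]\) through \(P\) contains the plane \(X+P\) and must meet \(B\). Quotienting by \(X+P\) and applying Lemma~\ref{lem:disjoint-subspace-count} to an \((n-2)\)-space with a \(k\)-subspace shows that exactly \(R_{n,k}\) of the \(T\) such \(k\)-spaces are disjoint from \(B\). These are excluded, since \(\cF\) is intersecting, so
\[
  d_P(\cF)\le (T-R_{n,k})+d_P(\cB)\le T-R_{n,k}+|\cB|\le T+Q_i .
\]
Hence \(P\notin S_X\). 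Consequently every \(P\in S_X\) meets \(X+B\) nontrivially in the sense that \((X+P)\cap B\ne 0\) for all \(B\in\cB\). Since \(X\nleq B\), this gives \(P\le X+B\), and therefore
\[
  S_X\subseteq W:=\bigcap_{B\in\cB}(X+B),\qquad \dim W\le k+1 .
\]
The hypothesis \(n\ge2k\) is only needed so that the quotient count is meaningful.

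\textbf{Step 2 (small \(W\)).} If \(\dim W\le i\), then \(|S_X|\le\points{i}-1<q\points{i}\), and we are done. So the remaining case is \(i+1\le\dim W\le k+1\). Here every \(B\in\cB\) satisfies \(W\le X+B\), so \(B\cap W\) is either \(W\) or a hyperplane of \(W\) not containing \(X\). In particular \(\cB\) is tightly localized: every member contains a hyperplane of \(W\) complementary to \(X\), which is exactly the \(\gamma\)-type structure of \(\cG_i\).

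\textbf{Step 3 (main obstacle: \(\dim W\ge i+1\)).} I expect this case to be the hardest. The first approach I would try is to refine Step 1. For \(P\in S_X\) and \(B\in\cB\), the line \(X+P\) now meets \(B\) in a single point \(P_B\). The star members through \(X+P\) that are disjoint from \(B\) modulo \(P_B\) are still forbidden, which gives a loss close to \(R_{n,k}\) up to lower-order terms. I would combine this with the requirement \(d_P(\cB)>Q_i\). Counting members of \(\cB\) through \(P\) via their projection to \(V/X\), as in the proof of Proposition~\ref{prop:Gi-point-degrees}, each such member projects onto a \(k\)-space containing \(\pi(W)\) and is fixed on \(\pi(P)\). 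This should bound \(d_P(\cB)\) by \(q^{k-1}\qbinom{n-\dim W}{k-\dim W+1}\), which is at most \(Q_i\) once \(\dim W\ge i+1\). That would contradict \(P\in S_X\), leaving \(S_X\) empty or too small.

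If the projection bound turns out to be too lossy, my fallback is a double count in the spirit of Proposition~\ref{prop:anti-clustering-reduction}. It would use \(\sum_{P\in S_X}d_P(\cB)=\sum_B|B\cap S_X|\), together with \(|B\cap S_X|\le\points{\dim W-1}\) for most \(B\) and the bound \(|\cB|\le R_{n,k}+Q_i\).
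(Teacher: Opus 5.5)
Your proposal is correct and is essentially the paper's proof. The one-member loss estimate gives \(S_X\subseteq\bigcap_{B\in\cB}(X+B)\). The quotient count in \(V/X\) (a \(k\)-space containing the projected core, with the complement map fixed on \(\pi(P)\)) then yields \(d_P(\cB)\le Q_i\) when the core is large, while a small core leaves fewer than \(q\points{i}\) candidate points. The paper phrases this dichotomy with \(U=\langle\pi(P):P\in S_X\rangle\) rather than your \(W\), which makes no difference.

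The projection bound alone settles your Step~3, so the fallback is unnecessary. The proposed refinement of Step~1 should be dropped: it gives no extra loss, since every \(k\)-space through \(X+P\) contains \(P_B\le B\) and therefore meets \(B\).
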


\begin{proof}[Proof of Proposition~\ref{prop:one-member-loss-reduction}]
Assume for contradiction that
\[
  d_{\points{i+1}}(\cF)>T+Q_i.
\]
Let
\[
  S_X=\{P\le V:\dim P=1,\ P\ne X,\ d_P(\cF)>T+Q_i\}.
\]
As in the proof of Proposition~\ref{prop:anti-clustering-reduction}, we have
\[
  |S_X|\ge q\points{i}
\]
and, for every \(P\in S_X\),
\[
  d_P(\cB)>Q_i.
\]

We first prove a one-member loss estimate.
Let \(P\in S_X\), and suppose that there is \(B_0\in\cB\) such that
\[
  P\nleq X+B_0.
\]
Then the plane \(L=X+P\) is disjoint from \(B_0\).
Indeed, if \(0\ne v\in L\cap B_0\), then \(v\notin X\) because \(B_0\) avoids \(X\), and hence \(P\le X+v\le X+B_0\), a contradiction.

Among all \(k\)-subspaces containing \(L\), exactly
\[
  R_{n,k}
  =
  q^{k(k-2)}\qbinom{n-k-2}{k-2}
\]
are disjoint from \(B_0\).
To see this, quotient by \(L\).
The image of \(B_0\) is a \(k\)-subspace of \(V/L\), and we need to choose a \((k-2)\)-subspace of \(V/L\) disjoint from it.
Lemma~\ref{lem:disjoint-subspace-count}, with \(N=n-2\), \(m=k\), and \(r=k-2\), gives the displayed number.
None of these \(R_{n,k}\) subspaces can belong to \(\cF[X]\), since each is disjoint from \(B_0\in\cF\).
Therefore
\[
  d_P(\cF[X])\le T-R_{n,k}.
\]
Using the assumed bound \(|\cB|\le R_{n,k}+Q_i\), we obtain
\[
  d_P(\cF)
  =
  d_P(\cF[X])+d_P(\cB)
  \le
  T-R_{n,k}+|\cB|
  \le
  T+Q_i,
\]
contradicting \(P\in S_X\).
Consequently,
\[
  P\le X+B
  \qquad\text{for every }P\in S_X\text{ and every }B\in\cB.
\]

Let \(\pi:V\to V/X\) be the quotient map, and put
\[
  U=\langle \pi(P):P\in S_X\rangle\le V/X.
\]
The preceding conclusion implies
\[
  U\le \pi(B)
  \qquad\text{for every }B\in\cB.
\]
Write \(s=\dim U\).
The preimage \(\pi^{-1}(U)\) has dimension \(s+1\) and contains \(X\), so it contains exactly \(\points{s+1}-1=q\points{s}\) points other than \(X\).
Equivalently, each point of \(U\) has \(q\) point-lifts outside \(X\).
Since \(|S_X|\ge q\points{i}\), we must have \(s\ge i\): if \(s\le i-1\), then the number of points \(P\ne X\) with \(\pi(P)\le U\) is at most
\[
  q\points{s}\le q\points{i-1}<q\points{i},
\]
contradicting the definition of \(U\) and the lower bound on \(|S_X|\).

Fix \(P\in S_X\).
Every member \(B\in\cB\) counted by \(d_P(\cB)\) has \(\pi(B)\) a \(k\)-subspace of \(V/X\) containing \(U\).
There are \(\qbinom{n-s-1}{k-s}\) choices for this quotient \(k\)-subspace.
For each such quotient, the members \(B\) avoiding \(X\) are complements to \(X\), and the additional condition \(P\le B\) fixes the complement map on the point \(\pi(P)\); hence there are at most \(q^{k-1}\) choices.
Thus
\[
  d_P(\cB)
  \le
  q^{k-1}\qbinom{n-s-1}{k-s}
  \le
  q^{k-1}\qbinom{n-i-1}{k-i}
  =
  Q_i,
\]
where the second inequality follows because \(s\ge i\).
This contradicts \(d_P(\cB)>Q_i\).
The contradiction proves the proposition.
\end{proof}

\begin{rmk}
Propositions~\ref{prop:anti-clustering-reduction} and~\ref{prop:one-member-loss-reduction} should not be read as a proof of Conjecture~\ref{conj:q-large-i-corrected}.
They isolate the two obstructions that any strict counterexample must overcome: sufficiently large diversity outside a maximum point-star, and a strong localization of many noncentral high points.
\end{rmk}

The same incidence count gives a weaker but unconditional estimate in terms of the diversity.

\begin{cor}[Diversity bound]\label{cor:crude-large-i-bound}
Let \(3\le i\le k\), let \(V\) be \(n\)-dimensional with \(n\ge k+1\), let \(\cF\subseteq\Gr{V}{k}\) be intersecting, and put
\[
  T=\qbinom{n-2}{k-2}.
\]
Let \(X\le V\) be a point of maximum degree in \(\cF\), and write \(\cB=\cF\setminus\cF[X]\).
Then
\[
  d_{\points{i+1}}(\cF)
  \le
  T+
  \frac{\points{k}}{q\points{i}}|\cB|.
\]
Consequently, if
\[
  |\cB|\le q^k\qbinom{n-i-1}{k-i},
\]
then
\[
  d_{\points{i+1}}(\cF)
  \le
  T+
  \frac{\points{k}}{\points{i}}\,
  q^{k-1}\qbinom{n-i-1}{k-i}.
\]
\end{cor}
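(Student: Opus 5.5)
The plan is to reuse the incidence double count from Proposition~\ref{prop:anti-clustering-reduction}, now without assuming a counterexample. Put \(D=d_{\points{i+1}}(\cF)\). If \(D\le T\) there is nothing to prove, since the right-hand side is at least \(T\). So we assume \(D>T\).

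First, at least \(\points{i+1}\) points have degree at least \(D\). The point \(X\) has maximum degree, so it is among the top \(\points{i+1}\) points in some ordering. Hence we can choose a set \(H\) of \(\points{i+1}-1=q\points{i}\) points, all different from \(X\), with \(d_P(\cF)\ge D\) for each \(P\in H\). If \(X\) is not among them, we simply drop any one point from the top \(\points{i+1}\).

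Next, bound the star part exactly as before. For \(P\in H\), every member of \(\cF[X]\) containing \(P\) contains the plane \(X+P\). Thus \(d_P(\cF[X])\le T\), and so
\[
  d_P(\cB)\ge D-T \qquad\text{for every } P\in H.
\]

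Then double-count the pairs \((P,B)\) with \(P\in H\), \(B\in\cB\) and \(P\le B\). Each \(B\) contains at most \(\points{k}\) points, which gives
\[
  q\points{i}(D-T)\le\sum_{P\in H}d_P(\cB)=\sum_{B\in\cB}|B\cap H|\le\points{k}|\cB|.
\]
Dividing by \(q\points{i}\) yields the first inequality of the corollary.

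The consequence follows by substituting \(|\cB|\le q^k\qbinom{n-i-1}{k-i}\) and cancelling one factor of \(q\). The only point needing care is the selection of \(H\) avoiding \(X\) when \(X\) ties with other points, and that is handled by the choice made in the first step. No step looks like a real obstacle.
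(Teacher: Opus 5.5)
Your proposal is correct and follows essentially the same argument as the paper. You choose \(q\points{i}\) points other than \(X\) of degree at least \(D\), bound each one's degree in \(\cF[X]\) by \(T\) through the plane \(X+P\), and double-count incidences with \(\cB\) using that every \(k\)-space contains \(\points{k}\) points.
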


\begin{proof}
Put \(D=d_{\points{i+1}}(\cF)\).
If \(D\le T\), there is nothing to prove.
Assume \(D>T\).
There are at least \(\points{i+1}\) points of \(V\) with degree at least \(D\).
Since \(X\) is a maximum-degree point, after excluding \(X\) we can choose a set \(H\) of noncentral points such that
\[
  |H|=\points{i+1}-1=q\points{i}
  \qquad\text{and}\qquad
  d_P(\cF)\ge D\quad(P\in H).
\]
As in the proof of Proposition~\ref{prop:anti-clustering-reduction}, for every \(P\in H\) we have \(d_P(\cF[X])\le T\), and hence
\[
  d_P(\cB)\ge D-T.
\]
Double-counting incidences \((P,B)\) with \(P\in H\), \(B\in\cB\), and \(P\le B\), we get
\[
  q\points{i}(D-T)
  \le
  \sum_{P\in H}d_P(\cB)
  =
  \sum_{B\in\cB}|B\cap H|
  \le
  \points{k}|\cB|,
\]
because each \(k\)-subspace contains exactly \(\points{k}\) points.
This proves the first inequality.
The second follows by substituting the stated upper bound on \(|\cB|\).
\end{proof}

The clustering alternative is impossible in one useful quotient-span case.

\begin{cor}[Low quotient-span case]\label{cor:quotient-span-case}
Let \(3\le i\le k\), let \(V\) be \(n\)-dimensional with \(n\ge k+1\), let \(\cF\subseteq\Gr{V}{k}\) be intersecting, and put
\[
  T=\qbinom{n-2}{k-2},
  \qquad
  Q_i=q^{k-1}\qbinom{n-i-1}{k-i},
  \qquad
  \Gamma_i=qQ_i.
\]
Let \(X\le V\) be a point of maximum degree in \(\cF\), write \(\cB=\cF\setminus\cF[X]\), and let \(\pi:V\to V/X\) be the quotient map.
Put
\[
  S_X=\{P\le V:\dim P=1,\ P\ne X,\ d_P(\cF)>T+Q_i\}.
\]
If
\[
  |\cB|\le \Gamma_i
\]
and
\[
  \dim\langle \pi(P):P\in S_X\rangle\le i,
\]
then
\[
  d_{\points{i+1}}(\cF)\le T+Q_i.
\]
\end{cor}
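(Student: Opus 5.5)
We argue by contradiction and combine Proposition~\ref{prop:anti-clustering-reduction} with the quotient-fibre count used in Proposition~\ref{prop:one-member-loss-reduction}. Suppose \(d_{\points{i+1}}(\cF)>T+Q_i\). By Proposition~\ref{prop:anti-clustering-reduction}, \(|S_X|\ge q\points{i}\), and every \(P\in S_X\) satisfies \(d_P(\cB)>Q_i\). Put \(U=\langle\pi(P):P\in S_X\rangle\) and \(s=\dim U\le i\). The points \(P\ne X\) with \(\pi(P)\le U\) are exactly the points of \(\pi^{-1}(U)\) other than \(X\), and there are \(q\points{s}\) of them. Since \(|S_X|\ge q\points{i}\), this forces \(s=i\) and \(S_X=\Gr{\pi^{-1}(U)}{1}\setminus\{X\}\). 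Set \(Z=\pi^{-1}(U)\), an \((i+1)\)-space containing \(X\).

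Choose \(H=S_X\), which has the exact size \(q\points{i}\). Because \(|\cB|\le\Gamma_i\), the first alternative of Proposition~\ref{prop:anti-clustering-reduction} fails. Hence some \(B_0\in\cB\) satisfies \(|B_0\cap H|>\points{i}\). Now \(B_0\cap Z\) is a subspace of \(Z\) not containing \(X\), so its dimension is at most \(i\), and all of its points lie in \(H\). Therefore \(|B_0\cap H|=\points{\dim(B_0\cap Z)}\le\points{i}\), a contradiction. This shows that the clustering alternative cannot occur under the low-span hypothesis, and the conclusion follows.

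The step to check carefully is the identification \(S_X=Z\setminus\{X\}\). It uses only the counting \(q\points{s}\le q\points{i-1}<q\points{i}\) when \(s<i\), together with equality of cardinalities when \(s=i\). A point of \(B_0\cap H\) lies in \(H\subseteq Z\), so \(B_0\cap H\) is exactly the set of points of \(B_0\cap Z\). Since \(X\nleq B_0\), the bound \(\dim(B_0\cap Z)\le i\) holds. None of the Hilton--Milner or diversity machinery is needed.
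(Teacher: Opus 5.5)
Your proof is correct and follows essentially the same route as the paper: argue by contradiction, invoke Proposition~\ref{prop:anti-clustering-reduction}, discard the diversity alternative by the hypothesis \(|\cB|\le\Gamma_i\), and rule out the clustering alternative using \(X\nleq B\) together with the low quotient span. The only difference is that the paper gets \(|B\cap H|\le|\pi(H)|\le\points{i}\) directly from the injectivity of \(\pi\) on the points of any \(B\in\cB\), for an arbitrary \(H\subseteq S_X\) of size \(q\points{i}\). Your intermediate step identifying \(S_X\) with the points of \(Z\) other than \(X\) is therefore valid but not needed.
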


\begin{proof}
Assume for contradiction that \(d_{\points{i+1}}(\cF)>T+Q_i\).
By Proposition~\ref{prop:anti-clustering-reduction}, the set \(S_X\) has size at least \(q\points{i}\).
Choose \(H\subseteq S_X\) with \(|H|=q\points{i}\).
Since \(\langle\pi(S_X)\rangle\) has dimension at most \(i\), the set \(\pi(H)\) is contained in an \(i\)-dimensional subspace of \(V/X\), and therefore
\[
  |\pi(H)|\le\points{i}.
\]
For every \(B\in\cB\), we have \(X\nleq B\), so the quotient map is injective on the set of points of \(B\).
Consequently
\[
  |B\cap H|\le |\pi(H)|\le\points{i}
  \qquad\text{for every }B\in\cB.
\]
Thus the clustering alternative in Proposition~\ref{prop:anti-clustering-reduction} cannot occur.
The diversity alternative cannot occur either, because \(|\cB|\le\Gamma_i\) by assumption.
This contradiction proves the desired degree bound.
\end{proof}

\section{Concluding remarks}
\label{sec:conclusion}

The proof of Theorem~\ref{thm:q-fw} shows that the \(q\)-Frankl--Wang degree theorem follows formally from a sufficiently strong nontrivial-intersecting bound.
More precisely, for \(k\ge3\), it is enough to know that every intersecting \(\cF\subseteq\Gr{V}{k}\) with zero total intersection satisfies
\[
  |\cF|\le \points{k}\qbinom{n-2}{k-2}.
\]
The vector-space Hilton--Milner theorem supplies this inequality for all \(n\ge2k+1\): the original theorem of Blokhuis et al.~\cite{Blokhuis2010} gives all cases except \(q=2,n=2k+1\), and Wang--Xu--Zhang~\cite{WXZ2023} supplies this boundary case.
Thus no shifting~\cite{Frankl1987shifting} or spectral~\cite{HZ2017} replacement is needed for Theorem~\ref{thm:q-fw}; the order-statistic statement is a direct consequence of the nontrivial-intersecting theorem and the elementary high-point incidence count.

The situation for Conjecture~\ref{conj:q-large-i-corrected} is different.
Suppose \(d_{\points{i+1}}(\cF)>T+Q_i\), where \(T=\qbinom{n-2}{k-2}\) and \(Q_i=q^{k-1}\qbinom{n-i-1}{k-i}\).
After choosing a maximum-degree point \(X\), there are at least \(\points{i+1}-1\) points \(P\ne X\) with \(d_P(\cF\setminus\cF[X])>Q_i\), because \(d_P(\cF[X])\le T\).
Proposition~\ref{prop:anti-clustering-reduction} shows that the failure of the conjectured bound has only two possible causes.
Either the diversity \(|\cF\setminus\cF[X]|\) already exceeds
\[
  q^k\qbinom{n-i-1}{k-i},
\]
which is exactly the lower threshold in the cross-intersecting theorem of Ihringer and Kupavskii~\cite[Theorem 4.7]{IK2026}, or the noncentral high points are genuinely clustered: some member of \(\cF\setminus\cF[X]\) contains more than \(\points{i}\) of them.
Thus, after Corollary~\ref{cor:quotient-span-case}, the difficult case is the following quotient-spanning one: if \(H\) is a set of \(\points{i+1}-1\) noncentral high points and \(\pi:V\to V/X\) is the quotient map, then
\[
  \dim\langle \pi(P):P\in H\rangle>i.
\]
Proposition~\ref{prop:one-member-loss-reduction} sharpens this picture.
It shows that a strict counterexample must have
\[
  |\cF\setminus\cF[X]|
  >
  R_{n,k}+Q_i,
  \qquad
  R_{n,k}=q^{k(k-2)}\qbinom{n-k-2}{k-2}.
\]
Thus the bounded-diversity clustered case is eliminated by the one-member loss estimate.
Moreover, standard Gaussian estimates show that for fixed \(q\), fixed \(\varepsilon>0\), \(i\ge\varepsilon k\), and \(n>C\,k\) with \(C>2\), one has
\[
  q^k\qbinom{n-i-1}{k-i}
  \ll
  R_{n,k}
\]
as \(k\to\infty\).
More explicitly, standard bounds for Gaussian coefficients give
\[
  \log_q
  \frac{R_{n,k}}
       {q^k\qbinom{n-i-1}{k-i}}
  \ge
  (i-2)(n-k)-i-O_q(1).
\]
The remaining task toward Conjecture~\ref{conj:q-large-i-corrected} is therefore a very-large-diversity localization theorem: when \(|\cF\setminus\cF[X]|>R_{n,k}+Q_i\), one should prove that the noncentral high points are forced into a bounded low-dimensional quotient core, specifically a \(\cG_2\)-type core, and hence are too few to violate the \(\points{i+1}\)-th degree bound for \(i\) linear in \(k\).

\subsection*{Open problems}

We close with several concrete problems suggested by the results above.

\begin{ques}[The exact number of high points]\label{ques:high-point-number}
For \(n\ge2k+1\), determine
\[
  h_q(n,k)
  =
  \max_{\cF}
  \left|
  \left\{
    P\le V:\dim P=1,\ 
    d_P(\cF)>\qbinom{n-2}{k-2}
  \right\}
  \right|,
\]
where the maximum is over all intersecting families \(\cF\subseteq\Gr{V}{k}\).
The Hilton--Milner construction gives \(h_q(n,k)\ge\points{k+1}\), while Theorem~\ref{thm:q-fw} gives
\[
  h_q(n,k)\le \points{k}^{2}-1.
\]
Moreover, Theorem~\ref{thm:q-kplus2-cq3} and Corollary~\ref{cor:kplus2-index-sharp} imply that, for fixed \(q\), sufficiently large \(k\), and \(n>3k\), one has
\[
  h_q(n,k)=\points{k+1}.
\]
Is this equality true in the full range \(n\ge2k+1\), or at least in a range \(n\ge2k+M_q\)?
\end{ques}

\begin{ques}[The optimal range for the corrected \(k+2\) theorem]\label{ques:kplus2-range}
For fixed \(q\), determine the smallest additive threshold \(M_q\), if it exists, such that for all sufficiently large \(k\), the condition \(n\ge2k+M_q\) implies
\[
  d_{1+\points{k+1}}(\cF)
  \le
  \qbinom{n-2}{k-2}
\]
for every intersecting family \(\cF\subseteq\Gr{V}{k}\).
Theorem~\ref{thm:q-kplus2-cq3} proves the weaker linear range \(n>3k\).
\end{ques}

\begin{ques}[Large-\(i\) localization]\label{ques:large-i-localization}
For fixed \(q\) and \(\varepsilon>0\), prove or disprove the following localization principle.
There is a constant \(C_{\varepsilon,q}\) such that, whenever \(i\ge\varepsilon k\), \(n>C_{\varepsilon,q}\,k\), and \(\cF\subseteq\Gr{V}{k}\) is intersecting, every strict counterexample to Conjecture~\ref{conj:q-large-i-corrected} has a maximum-degree point \(X\) for which the noncentral high points are forced into a bounded low-dimensional quotient core in \(V/X\).
An effective form of such a theorem, combined with Proposition~\ref{prop:one-member-loss-reduction}, would prove the large-\(i\) conjecture.
\end{ques}

\begin{ques}[\(r\)-subspace degree order statistics]\label{ques:r-subspace-degrees}
Let \(1\le r<k\), and for an \(r\)-subspace \(S\le V\) define
\[
  d_S(\cF)=|\{F\in\cF:S\le F\}|.
\]
In a point-star, every \(r\)-subspace \(S\) not containing the center has degree
\[
  \qbinom{n-r-1}{k-r-1}.
\]
How many \(r\)-subspaces can have degree larger than this threshold in an intersecting family?
This would be an order-statistic counterpart to the finite-vector-space \(d\)-degree Erd\H{o}s--Ko--Rado theorem of Shan and Zhou~\cite{SZ2024}.
\end{ques}

\bibliographystyle{plain}
\bibliography{main}

@article{Blokhuis2010,
  author = {Blokhuis, Aart and Brouwer, Andries E. and Chowdhury, Ameera and Frankl, P{\'e}ter and Mussche, Tibor and Patk{\'o}s, Bal{\'a}zs and Sz{\H o}nyi, Tam{\'a}s},
  title = {A {H}ilton-{M}ilner theorem for vector spaces},
  journal = {Electronic Journal of Combinatorics},
  volume = {17},
  number = {1},
  year = {2010},
  pages = {Research Paper R71},
  doi = {10.37236/343},
  url = {https://www.emis.de/journals/EJC/Volume_17/PDF/v17i1r71.pdf}
}

@article{EKR1961,
    author = {Erd\H{o}s, P. and Ko, C. and Rado, R.},
    title = {{Intersection Theorems for Systems of Finite Sets}},
    journal = {Quart. J. Math.},
    volume = {12},
    number = {1},
    pages = {313--320},
    year = {1961},
    doi = {10.1093/qmath/12.1.313},
}

@article{FW2025,
  author = {Frankl, P{\'e}ter and Wang, Jian},
  title = {On the largest degrees in intersecting hypergraphs},
  journal = {arXiv preprint arXiv:2511.15508},
  year = {2025},
  eprint = {2511.15508},
  archiveprefix = {arXiv},
  primaryclass = {math.CO},
  doi = {10.48550/arXiv.2511.15508},
  url = {https://arxiv.org/abs/2511.15508}
}

@article{Hsieh1975,
  author = {Hsieh, Wen Ning},
  title = {Intersection theorems for systems of finite vector spaces},
  journal = {Discrete Math.},
  volume = {12},
  year = {1975},
  pages = {1--16}
}

@article{HR2026,
  author = {Huang, Hao and Rao, Rui},
  title = {On the $\ell$-th largest degree of an intersecting family},
  journal = {arXiv preprint arXiv:2602.01692v3},
  year = {2026},
  eprint = {2602.01692},
  archiveprefix = {arXiv},
  primaryclass = {math.CO},
  doi = {10.48550/arXiv.2602.01692},
  url = {https://arxiv.org/abs/2602.01692}
}

@article{HZ2017,
  author = {Huang, Hao and Zhao, Yi},
  title = {Degree versions of the {E}rd{\H o}s-{K}o-{R}ado theorem and {E}rd{\H o}s hypergraph matching conjecture},
  journal = {J. Combin. Theory Ser. A},
  volume = {150},
  year = {2017},
  pages = {233--247},
  doi = {10.1016/j.jcta.2017.03.002}
}

@article{IK2026,
  author = {Ihringer, Ferdinand and Kupavskii, Andrey},
  title = {Structure of $t$-{I}ntersecting Families of Vector Spaces},
  journal = {arXiv preprint arXiv:2605.02698},
  year = {2026},
  eprint = {2605.02698},
  archiveprefix = {arXiv},
  primaryclass = {math.CO},
  doi = {10.48550/arXiv.2605.02698},
  url = {https://arxiv.org/abs/2605.02698}
}

@article{SZ2024,
  author = {Shan, Yunjing and Zhou, Junling},
  title = {$d$-Degree {E}rd{\H o}s-{K}o-{R}ado theorem for finite vector spaces},
  journal = {Acta Mathematica Hungarica},
  volume = {176},
  pages = {215--235},
  year = {2025},
  doi = {10.1007/s10474-025-01543-1},
  url = {https://arxiv.org/abs/2411.17985}
}

@article{WXZ2023,
  author = {Wang, Jun and Xu, Ao and Zhang, Huajun},
  title = {A {K}ruskal--{K}atona-type theorem for graphs: {$q$}-{K}neser graphs},
  journal = {Journal of Combinatorial Theory, Series A},
  volume = {198},
  year = {2023},
  pages = {105766},
  doi = {10.1016/j.jcta.2023.105766}
}

@article{HiltonMilner1967,
  author = {Hilton, A. J. W. and Milner, E. C.},
  title = {Some intersection theorems for systems of finite sets},
  journal = {Quarterly Journal of Mathematics},
  volume = {18},
  number = {1},
  pages = {369--384},
  year = {1967},
  doi = {10.1093/qmath/18.1.369}
}

@book{FT2018,
  author = {Frankl, P{\'e}ter and Tokushige, Norihide},
  title = {Extremal Problems for Finite Sets},
  series = {Student Mathematical Library},
  volume = {86},
  publisher = {American Mathematical Society},
  address = {Providence, RI},
  year = {2018},
  pages = {xvii+224},
  doi = {10.1090/stml/086}
}

@article{Frankl1987maxdeg,
  author = {Frankl, P{\'e}ter},
  title = {{E}rd{\H o}s-{K}o-{R}ado theorem with conditions on the maximal degree},
  journal = {J. Combin. Theory Ser. A},
  volume = {46},
  number = {2},
  year = {1987},
  pages = {252--263},
  doi = {10.1016/0097-3165(87)90005-7}
}

@article{AK1997,
  author = {Ahlswede, Rudolf and Khachatrian, Levon H.},
  title = {The complete intersection theorem for systems of finite sets},
  journal = {European J. Combin.},
  volume = {18},
  number = {2},
  year = {1997},
  pages = {125--136},
  doi = {10.1006/eujc.1995.0092}
}

@incollection{Frankl1987shifting,
  author = {Frankl, P{\'e}ter},
  title = {The shifting technique in extremal set theory},
  booktitle = {Surveys in Combinatorics 1987},
  editor = {Whitehead, C.},
  series = {London Math. Soc. Lecture Note Ser.},
  volume = {123},
  publisher = {Cambridge Univ. Press},
  address = {Cambridge},
  year = {1987},
  pages = {81--110}
}

@article{Kupavskii2018diversity,
  author = {Kupavskii, Andrey},
  title = {Diversity of uniform intersecting families},
  journal = {European J. Combin.},
  volume = {74},
  year = {2018},
  pages = {39--47},
  doi = {10.1016/j.ejc.2018.07.002}
}

@article{FW1986,
  author = {Frankl, P{\'e}ter and Wilson, Richard M.},
  title = {The {E}rd{\H o}s-{K}o-{R}ado theorem for vector spaces},
  journal = {J. Combin. Theory Ser. A},
  volume = {43},
  number = {2},
  year = {1986},
  pages = {228--236},
  doi = {10.1016/0097-3165(86)90063-4}
}

\end{document}